\providecommand{\U}[1]{\protect\rule{.1in}{.1in}}
\tikzset{>=Triangle}
\newtheorem{theorem}{Theorem}[section]
\newtheorem{proposition}[theorem]{Proposition}
\newtheorem{corollary}[theorem]{Corollary}
\newtheorem{remark}[theorem]{Remark}
\newtheorem{lemma}[theorem]{Lemma}
\numberwithin{equation}{section}
\pgfplotsset{compat=1.17}
\begin{document}
\title[Constants of Bennett's inequality and the Gale--Berlekamp switching game]{Upper bounds for the constants of Bennett's inequality and the Gale--Berlekamp
switching game}
\author[D. Pellegrino]{Daniel Pellegrino}
\address{Departamento de Matem\'{a}tica \\
Universidade Federal da Para\'{\i}ba \\
58.051-900 - Jo\~{a}o Pessoa, Brazil.}
\email{dmpellegrino@gmail.com and daniel.pellegrino@academico.ufpb.br}
\author[A. Raposo Jr.]{Anselmo Raposo Jr.}
\address{Departamento de Matem\'{a}tica \\
Universidade Federal do Maranh\~{a}o \\
65085-580 - S\~{a}o Lu\'{\i}s, Brazil.}
\email{anselmo.junior@ufma.br}
\thanks{D. Pellegrino is supported by CNPq Grant 307327/2017-5  }
\subjclass[2010]{05A05, 91A46, 15A60, 15A51, 47A07}
\keywords{Combinatorial games; Hadamard matrices; Berlekamp's switching game; unbalancing lights problem}

\begin{abstract}
In $1977$, G. Bennett proved, by means of non-deterministic methods, an
inequality which plays a fundamental role in a series of optimization
problems. More precisely, Bennett's inequality shows that, for $p_{1},p_{2}
\in\lbrack1,\infty]$ and all positive integers $n_{1},n_{2}$, there exists a
bilinear form $A_{n_{1},n_{2}}\colon\left(  \mathbb{R}^{n_{1}},\left\Vert
\cdot\right\Vert _{p_{1}}\right)  \times\left(  \mathbb{R}^{n_{2}},\left\Vert
\cdot\right\Vert _{p_{2}}\right)  \longrightarrow\mathbb{R}$ with coefficients
$\pm1$ satisfying
\[
\left\Vert A_{n_{1},n_{2}}\right\Vert \leq C_{p_{1},p_{2}}\max\left\{
n_{1}^{1-\frac{1}{p_{1}}}n_{2}^{\max\left\{  \frac{1}{2}-\frac{1}{p_{2}
},0\right\}  },n_{2}^{1-\frac{1}{p_{2}}}n_{1}^{\max\left\{  \frac{1}{2}
-\frac{1}{p_{1}},0\right\}  }\right\}
\]
for a certain constant $C_{p_{1},p_{2}}$ depending just on $p_{1},p_{2}$;
moreover, the exponents of $n_{1},n_{2}$ cannot be improved. In this paper,
using a constructive approach, we prove that $C_{p_{1},p_{2}}\leq\sqrt{8/5}$
whenever $p_{1},p_{2}\in\left[  2,\infty\right]  $ or $p_{1}=p_{2}=p\in\left[
1,\infty\right]  $. Our techniques are applied to provide new upper bounds for
the constants of a combinatorial game, known as Gale--Berlekamp switching game or unbalancing lights problem. As a consequence, we improve estimates
obtained by Brown and Spencer in $1971$ and by Carlson and Stolarski in $2004$.

\end{abstract}
\maketitle
\tableofcontents

\section{Introduction}

The task of proving sharp inequalities involving combinatorial arguments is
usually rather difficult. A common and powerful approach used in this
framework is to construct a suitable probability space and show that the
probability of the existence of an element with certain properties in this
space is positive. This approach was coined by Paul Erd\"{o}s (see
\cite[Preface]{alon} for a completer account); it is called \textquotedblleft
Probabilistic Method\textquotedblright. One important application of the
Probabilistic Method can be found in the investigation of matrices (or
multilinear forms) with entries $\pm1$ in optimization problems. In this paper
we shall deal with three optimization problems in this framework: Bennett's
inequality, the Kahane--Salem--Zygmund inequality (KSZ inequality for short)
and the Gale--Berlekamp switching game. The Probabilistic Method, although
effective in many aspects, is obviously strongly non constructive. In the
particular case of the KSZ inequality, Bennett's inequality and the
Gale--Berlekamp switching game, it does not provide precise estimates of the
constants involved; in this paper we obtain better constants to these problems
by means of a constructive approach.

If $p_{1},p_{2}\in\lbrack1,\infty]$ and $n_{1},n_{2}$ are positive integers,
let $C_{p_{1},p_{2},n_{1},n_{2}}$ be the smallest constant such that there
exists a bilinear form $A_{n_{1},n_{2}}\colon\ell_{p_{1}}^{n_{1}}\times
\ell_{p_{2}}^{n_{2}}\rightarrow\mathbb{K}$ ($\mathbb{K=R}$ or $\mathbb{C}$)
with coefficients $\pm1$ satisfying%
\begin{equation}
\left\Vert A_{n_{1},n_{2}}\right\Vert \leq C_{p_{1},p_{2},n_{1},n_{2}}%
\max\left\{  n_{1}^{1/p_{1}^{\ast}}n_{2}^{\max\left\{  \frac{1}{2}-\frac
{1}{p_{2}},0\right\}  },n_{2}^{1/p_{2}^{\ast}}n_{1}^{\max\left\{  \frac{1}%
{2}-\frac{1}{p_{1}},0\right\}  }\right\}  \text{.}\label{7u}%
\end{equation}
Above and henceforth $\ell_{p}^{n}$ represents $\mathbb{K}^{n}$ with the
$\ell_{p}$-norm and $p^{\ast}$ denotes the conjugate of $p$, i.e.,
$1/p+1/p^{\ast}=1.$ We also assume $1/\infty=0$ and, as usual, we consider the
$\sup$ norm,
\[
\left\Vert A_{n_{1},n_{2}}\right\Vert :=\sup\left\{  \left\vert A_{n_{1}n_{2}%
}\left(  x,y\right)  \right\vert :\left\Vert x\right\Vert _{\ell_{p_{1}%
}^{n_{1}}}\leq1\text{ and }\left\Vert y\right\Vert _{\ell_{p_{2}}^{n_{1}}}%
\leq1\right\}  \text{.}%
\]
Bennett's inequality shows that%
\begin{equation}
C_{p_{1},p_{2}}:=\sup\left\{  C_{p_{1},p_{2},n_{1},n_{2}}:n_{1},n_{2}%
\in\mathbb{N}\right\}  <\infty\label{9u}%
\end{equation}
and that all exponents in the right-hand-side of (\ref{7u}) are optimal in the
sense that, if we consider smaller exponents in (\ref{7u}), the supremum in
(\ref{9u}) is infinity. The probabilistic techniques used in \cite{be2} do not
provide estimates for $C_{p_{1},p_{2}}$. Variants of this inequality were
independently obtained, also using probabilistic methods, by different authors
(see \cite{bayart,kah,man,mastylo,var}) and nowadays inequalities of this type
are usually called Kahane--Salem--Zygmund inequalities and play a crucial role
in several fields of modern analysis (see \cite{ABPS,israel,be1, boas,
mastylo}).

Recently, we have proved in \cite{JFA} that, for $p_{1},p_{2}\in
\lbrack2,\infty]$, given $\varepsilon>0$, there exists $N$ such that
\[
C_{p_{1},p_{2},n_{1},n_{2}}<1+\varepsilon\text{,}%
\]
whenever $n_{1},n_{2}>N$.

In this paper we continue the investigation initiated in \cite{JFA} and we
show, among other results, that%
\begin{equation}
C_{p_{1},p_{2}}\leq\sqrt{8/5} \label{raiz2}%
\end{equation}
whenever $p_{1},p_{2}\in\left[  2,\infty\right]  $ or $p_{1}=p_{2}\in\left[
1,\infty\right]  $. Using similar techniques, we also obtain new upper bounds
for the constants of a related combinatorial problem, known as Gale--Berlekamp
switching game (sometimes called Berlekamp's switching game or unbalancing
lights problem).

The paper is organized as follows. In Section \ref{Sec2} we show that in order
to prove (\ref{raiz2}) it suffices to prove that $C_{p_{1},p_{2}}\leq
\sqrt{8/5}$ for $p_{1}=p_{2}=2$ and complex scalars. In Section \ref{Sec3} we
prove (\ref{raiz2}). In Section \ref{Sec4} we provide new upper bounds for the
extension of Bennett's inequality for $m$-linear forms $T\colon\ell_{\infty
}^{n_{1}}\times\cdots\times\ell_{\infty}^{n_{m}}\rightarrow\mathbb{K}$. In the
final section (Section \ref{Sec5}) we apply our techniques to obtain new upper
bounds for the constants of the Gale--Berlekamp switching game, improving the
previous best known estimates due to Brown and Spencer (\cite[1971]{TBJS}) and
Carlson and Stolarski (\cite[2004]{car}).

\section{Preliminary results\label{Sec2}}

It is obvious that for $\mathbb{K=R}$ the constants $C_{p_{1},p_{2}%
,n_{1},n_{2}}$ and $C_{p_{1},p_{2}}$ are smaller than or equal to the
respective constants when $\mathbb{K=C}$. In this section we prove some
preliminary results that will help us to prove (\ref{raiz2}) for both real or
complex scalars.

\begin{proposition}
\label{PropA}If $p_{1},p_{2}\in\left[  2,\infty\right]  $, then
\[
C_{p_{1},p_{2}}\leq C_{p,p}\text{ for }2\leq p\leq\min\{p_{1},p_{2}\}
\]
and, in particular, $C_{p_{1},p_{2}}\leq C_{p,p}\leq C_{2,2}$ whenever $2\leq
p\leq\min\{p_{1},p_{2}\}$.
\end{proposition}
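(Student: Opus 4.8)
The plan is to prove the two inequalities $C_{p_1,p_2} \leq C_{p,p}$ for $2 \leq p \leq \min\{p_1,p_2\}$ and $C_{p,p} \leq C_{2,2}$ by a comparison argument at the level of the underlying bilinear forms. The key observation is that the normalization factor on the right-hand side of (\ref{7u}) is designed precisely so that an optimal $\pm1$ form for a smaller pair of exponents can be reused for a larger pair, because the inclusions $\ell_q^n \hookrightarrow \ell_p^n$ for $q \leq p$ interact predictably with the normalizing powers of $n_1, n_2$.

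Let me think about the structure. Since $p_1, p_2 \in [2,\infty]$, the maximum terms $\max\{1/2 - 1/p_i, 0\}$ all equal $1/2 - 1/p_i$, so the right-hand side of (\ref{7u}) simplifies to $\max\{ n_1^{1/p_1^*} n_2^{1/2 - 1/p_2},\, n_2^{1/p_2^*} n_1^{1/2 - 1/p_1}\}$, and a short computation shows both entries of this maximum equal $n_1^{1/2} n_2^{1/2}$ when... actually let me recompute: $1/p_1^* = 1 - 1/p_1$, so $n_1^{1-1/p_1} n_2^{1/2 - 1/p_2}$; this is not symmetric in general. The plan is therefore: fix $n_1, n_2$ and a $\pm1$ bilinear form $A$ that is near-optimal for the pair $(p,p)$. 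Estimate $\|A\|$ as a form on $\ell_{p_1}^{n_1} \times \ell_{p_2}^{n_2}$ by factoring the norm comparison through $\ell_p$. Concretely, for $p \leq p_1$ one has $\|x\|_{\ell_p^{n_1}} \leq \|x\|_{\ell_{p_1}^{n_1}} \cdot n_1^{1/p - 1/p_1}$ (the standard embedding inequality, which for $p \le p_1$ reads $\|x\|_{p_1} \le \|x\|_p \le n^{1/p - 1/p_1}\|x\|_{p_1}$), and similarly in the second variable. I would use these to bound the $(p_1,p_2)$-norm of $A$ by the $(p,p)$-norm times the appropriate power of $n_1 n_2$, then check that this power, after dividing by the normalization in (\ref{7u}), telescopes to give exactly $C_{p,p,n_1,n_2}$ up to the normalizing constants, so that taking the supremum over $n_1, n_2$ yields $C_{p_1,p_2} \leq C_{p,p}$. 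The in-particular statement $C_{p,p} \leq C_{2,2}$ is then just the special case $p_1 = p_2 = p$ and the target $2$.

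The arithmetic I expect to be delicate is matching the exponents exactly. Let me sketch it: bounding $|A(x,y)|$ for $\|x\|_{p_1} \le 1$, $\|y\|_{p_2} \le 1$, I pass to $\tilde x = x / \|x\|_p$, $\tilde y = y / \|y\|_p$ which are $\ell_p$-unit vectors, giving $|A(x,y)| \le \|A\|_{(p,p)} \|x\|_p \|y\|_p \le \|A\|_{(p,p)} \, n_1^{1/p - 1/p_1} n_2^{1/p - 1/p_2}$. Dividing through by the $(p,p)$-normalization $\max\{n_1^{1-1/p} n_2^{1/2-1/p}, n_2^{1-1/p} n_1^{1/2 - 1/p}\}$ and comparing to the $(p_1,p_2)$-normalization is where I must verify that the surplus factor $n_1^{1/p-1/p_1} n_2^{1/p - 1/p_2}$ is absorbed correctly. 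I anticipate the main obstacle is precisely this bookkeeping: because the normalization is a maximum of two asymmetric terms rather than a single monomial, I cannot simply cancel exponents monomial-by-monomial, and I will need to argue on each branch of the maximum separately (or show one branch dominates given $2 \le p \le \min\{p_1,p_2\}$). If the direct exponent-matching does not close cleanly, the fallback is to invoke the asymptotic result from \cite{JFA} (that $C_{p_1,p_2,n_1,n_2} \to 1$ as $n_1,n_2 \to \infty$) to handle large $n_1,n_2$ and treat small cases separately, but I expect the clean embedding argument to suffice.
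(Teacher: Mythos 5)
Your proposal is correct and follows essentially the same route as the paper: reuse the $(p,p)$-optimal $\pm1$ form on $\ell_{p_1}^{n_1}\times\ell_{p_2}^{n_2}$, bound its norm via the embedding $\Vert x\Vert_p\leq n_1^{1/p-1/p_1}\Vert x\Vert_{p_1}$ (the paper phrases this as a rescaling $z=n_1^{1/p_1-1/p}x$ plus H\"older), and check that the surplus factor $n_1^{1/p-1/p_1}n_2^{1/p-1/p_2}$ converts the $(p,p)$-normalization into the $(p_1,p_2)$-normalization. The bookkeeping you flag as delicate closes immediately because that surplus factor is a single positive monomial, so it distributes over both branches of the maximum and cancels exponent-by-exponent on each; no fallback to the asymptotic result is needed.
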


\begin{proof}
Let $2\leq p\leq\min\left\{  p_{1},p_{2}\right\}  $, $n_{1},n_{2}\in
\mathbb{N}$ and $A_{0}\colon\ell_{p}^{n_{1}}\times\ell_{p}^{n_{2}}%
\rightarrow\mathbb{K}$ be a bilinear form of the type%
\[
A_{0}\left(  x,y\right)  =%
{\textstyle\sum\limits_{i=1}^{n_{1}}}
{\textstyle\sum\limits_{j=1}^{n_{2}}}
\pm x_{i}y_{j}%
\]
satisfying%
\[
\left\Vert A_{0}\right\Vert \leq C_{p,p,n_{1},n_{2}}\max\left\{
n_{1}^{1/p^{\ast}}n_{2}^{\frac{1}{2}-\frac{1}{p}},n_{2}^{1/p^{\ast}}%
n_{1}^{\frac{1}{2}-\frac{1}{p}}\right\}  \text{.}%
\]

Let $A\colon\ell_{p_{1}}^{n_{1}}\times\ell_{p_{2}}^{n_{2}}\rightarrow
\mathbb{K}$ be the bilinear form defined by%
\[
A\left(  x,y\right)  =A_{0}\left(  x,y\right)  \text{.}%
\]
Note that
\begin{equation}
\left\Vert A\right\Vert =n_{1}^{\frac{1}{p}-\frac{1}{p_{1}}}n_{2}^{\frac{1}%
{p}-\frac{1}{p_{2}}}\sup_{\left\Vert x\right\Vert _{p_{1}},\left\Vert
y\right\Vert _{p_{2}}\leq1}\left\vert
{\textstyle\sum\limits_{i=1}^{n_{1}}}
{\textstyle\sum\limits_{j=1}^{n_{2}}}
\pm n_{1}^{\frac{1}{p_{1}}-\frac{1}{p}}n_{2}^{\frac{1}{p_{2}}-\frac{1}{p}%
}x_{i}y_{j}\right\vert \text{.} \label{B2}%
\end{equation}
Defining $z=n_{1}^{\frac{1}{p_{1}}-\frac{1}{p}}x$ and $w=n_{2}^{\frac{1}%
{p_{2}}-\frac{1}{p}}y$, by the H\"{o}lder inequality we have $\left\Vert
z\right\Vert _{p},\left\Vert w\right\Vert _{p}\leq1$ whenever $\left\Vert
x\right\Vert _{p_{1}},\left\Vert y\right\Vert _{p_{2}}\leq1$. Therefore, by
(\ref{B2}),
\begin{align*}
\left\Vert A\right\Vert  &  \leq n_{1}^{\frac{1}{p}-\frac{1}{p_{1}}}%
n_{2}^{\frac{1}{p}-\frac{1}{p_{2}}}\sup_{\left\Vert z\right\Vert
_{p},\left\Vert w\right\Vert _{p}\leq1}\left\vert
{\textstyle\sum\limits_{i=1}^{n_{1}}}
{\textstyle\sum\limits_{j=1}^{n_{2}}}
\pm z_{i}w_{j}\right\vert \\
&  =n_{1}^{\frac{1}{p}-\frac{1}{p_{1}}}n_{2}^{\frac{1}{p}-\frac{1}{p_{2}}%
}\left\Vert A_{0}\right\Vert \\
&  \leq n_{1}^{\frac{1}{p}-\frac{1}{p_{1}}}n_{2}^{\frac{1}{p}-\frac{1}{p_{2}}%
}C_{p,p,n_{1},n_{2}}\max\left\{  n_{1}^{1/p^{\ast}}n_{2}^{\frac{1}{2}-\frac
{1}{p}},n_{2}^{1/p^{\ast}}n_{1}^{\frac{1}{2}-\frac{1}{p}}\right\} \\
&  =C_{p,p,n_{1},n_{2}}\max\left\{  n_{1}^{1/p_{1}^{\ast}}n_{2}^{\frac{1}%
{2}-\frac{1}{p_{2}}},n_{2}^{1/p_{2}^{\ast}}n_{1}^{\frac{1}{2}-\frac{1}{p_{1}}%
}\right\}  \text{.}%
\end{align*}
This shows that, for every $n_{1},n_{2}\in\mathbb{N}$, we have%
\[
C_{p_{1},p_{2},n_{1},n_{2}}\leq C_{p,p,n_{1},n_{2}}%
\]
and, therefore,
\[
C_{p_{1},p_{2}}\leq C_{p,p}\text{.}%
\]

\end{proof}

\begin{proposition}
\label{PropB}If $\mathbb{K=C}$, then $C_{p,p}\leq C_{2,2}$ for every
$p\in\left[  1,\infty\right]  $.
\end{proposition}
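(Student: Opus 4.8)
The plan is to split the range of $p$ into $[2,\infty]$ and $[1,2]$. For $p\in[2,\infty]$ there is nothing to do beyond invoking the previous result: Proposition \ref{PropA}, applied with $p_{1}=p_{2}=p\geq 2$ and interior exponent $2$, already gives $C_{p,p}\leq C_{2,2}$ (for real or complex scalars). So the genuine content lies in the range $p\in[1,2]$, and this is exactly where the hypothesis $\mathbb{K}=\mathbb{C}$ is needed: the tool will be the Riesz--Thorin interpolation theorem, which holds with interpolation constant $1$ only in the complex setting.

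First I would reformulate the bilinear-form norm as an operator norm. For a $\pm1$ matrix $A=(a_{ij})$ of size $n_{1}\times n_{2}$, duality between $\ell_{p}$ and $\ell_{p^{\ast}}$ gives
\[
\left\Vert A\right\Vert _{\ell_{p}^{n_{1}}\times\ell_{p}^{n_{2}}}=\sup_{\left\Vert x\right\Vert _{p}\leq1}\Bigl\Vert\Bigl(\sum_{i}a_{ij}x_{i}\Bigr)_{j}\Bigr\Vert_{p^{\ast}}=\left\Vert T_{A}\colon\ell_{p}^{n_{1}}\rightarrow\ell_{p^{\ast}}^{n_{2}}\right\Vert,
\]
so the bilinear norm equals the norm of the associated linear operator $T_{A}$ from $\ell_{p}$ into $\ell_{p^{\ast}}$. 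Now fix $n_{1},n_{2}$ and choose a $\pm1$ matrix $A$ realizing $C_{2,2,n_{1},n_{2}}$ (the infimum is attained, there being only finitely many such matrices); then at the first endpoint $\left\Vert T_{A}\colon\ell_{2}^{n_{1}}\rightarrow\ell_{2}^{n_{2}}\right\Vert\leq C_{2,2,n_{1},n_{2}}\,(\max\{n_{1},n_{2}\})^{1/2}$, while at the second, because every entry has modulus $1$,
\[
\left\Vert T_{A}\colon\ell_{1}^{n_{1}}\rightarrow\ell_{\infty}^{n_{2}}\right\Vert=\max_{i,j}\left\vert a_{ij}\right\vert=1.
\]

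Next I would interpolate these two estimates with the same matrix $A$. For $p\in[1,2]$ put $\theta=2/p^{\ast}=2(1-\tfrac{1}{p})\in[0,1]$; feeding the exponents $(p_{0},q_{0})=(1,\infty)$ and $(p_{1},q_{1})=(2,2)$ into Riesz--Thorin, the interpolated exponents are $1/p_{\theta}=1-\theta/2=1/p$ and $1/q_{\theta}=\theta/2=1/p^{\ast}$, whence
\[
\left\Vert T_{A}\colon\ell_{p}^{n_{1}}\rightarrow\ell_{p^{\ast}}^{n_{2}}\right\Vert\leq 1^{\,1-\theta}\bigl(C_{2,2,n_{1},n_{2}}\,(\max\{n_{1},n_{2}\})^{1/2}\bigr)^{\theta}=C_{2,2,n_{1},n_{2}}^{\theta}\,(\max\{n_{1},n_{2}\})^{1/p^{\ast}}.
\]
Since $C_{2,2}\geq C_{2,2,1,1}=1$ and $\theta\leq1$, we have $C_{2,2,n_{1},n_{2}}^{\theta}\leq\max\{1,C_{2,2,n_{1},n_{2}}\}\leq C_{2,2}$, so $\left\Vert A\right\Vert_{\ell_{p}\times\ell_{p}}\leq C_{2,2}\,(\max\{n_{1},n_{2}\})^{1/p^{\ast}}$, i.e.\ $C_{p,p,n_{1},n_{2}}\leq C_{2,2}$; taking the supremum over $n_{1},n_{2}$ yields $C_{p,p}\leq C_{2,2}$.

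The one genuinely delicate point is the appeal to Riesz--Thorin \emph{with sharp constant} $1$: this is precisely the step that degrades for real scalars and thus forces the hypothesis $\mathbb{K}=\mathbb{C}$. Everything else (the reformulation as an operator norm, the matching of the interpolated exponent with $1/p^{\ast}$, and the elementary bound $C_{2,2,n_{1},n_{2}}^{\theta}\leq C_{2,2}$) is routine bookkeeping.
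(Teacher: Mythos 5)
Your proof is correct and follows essentially the same route as the paper: reduce to $p\in[1,2]$ via Proposition \ref{PropA}, fix a matrix realizing $C_{2,2,n_1,n_2}$, note its norm is $1$ at the $\ell_1$ endpoint, and apply complex Riesz--Thorin to get $C_{p,p,n_1,n_2}\leq C_{2,2,n_1,n_2}^{2/p^{\ast}}\leq C_{2,2}$. The only cosmetic difference is that you linearize to an operator $\ell_p\rightarrow\ell_{p^{\ast}}$ and interpolate that, whereas the paper invokes Riesz--Thorin directly for the bilinear form; the exponent bookkeeping and the final estimate are identical.
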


\begin{proof}
If $p\in\left[  2,\infty\right]  $, Proposition \ref{PropA} assures that
$C_{p,p}\leq C_{2,2}$. Let us consider $p\in\lbrack1,2)$ and let $A_{0}%
\colon\ell_{2}^{n_{1}}\times\ell_{2}^{n_{2}}\rightarrow\mathbb{C}$ be a
bilinear form of the type%
\[
A_{0}\left(  x,y\right)  =%
{\textstyle\sum\limits_{i=1}^{n_{1}}}
{\textstyle\sum\limits_{j=1}^{n_{2}}}
\pm x_{i}y_{j}%
\]
such that%
\[
\left\Vert A_{0}\right\Vert \leq C_{2,2,n_{1},n_{2}}\max\left\{  n_{1}%
^{1/2},n_{2}^{1/2}\right\}
\]

It is obvious that if $A_{1}\colon\ell_{1}^{n_{1}}\times\ell_{1}^{n_{2}%
}\rightarrow\mathbb{C}$ is defined by the same rule as $A_{0}$, then
$\left\Vert A_{1}\right\Vert =1$. It follows from the Riesz-Thorin Theorem
that, if the bilinear form $A\colon\ell_{p}^{n_{1}}\times\ell_{p}^{n_{2}%
}\rightarrow\mathbb{C}$ is also defined by the same rule as $A_{0}$, then%
\[
\Vert A\Vert\leq C_{2,2,n_{1},n_{2}}^{2/p^{\ast}}\max\left\{  n_{1}%
^{1/p^{\ast}},n_{2}^{1/p^{\ast}}\right\}  \text{.}%
\]
Finally, since $C_{2,2}\geq1$ (recall that $C_{2,2}\geq C_{2,2,1,1}=1$) and
$2/p^{\ast}\leq1$, we conclude that%
\[
C_{p,p}:=\sup\left\{  C_{p,p,n_{1},n_{2}}:n_{1},n_{2}\in\mathbb{N}\right\}
\leq\sup\left\{  C_{2,2,n_{1},n_{2}}^{2/p^{\ast}}:n_{1},n_{2}\in
\mathbb{N}\right\}  \leq C_{2,2}^{2/p^{\ast}}\leq C_{2,2}\text{.}%
\]

\end{proof}

\begin{proposition}
\label{PropC}If%

\begin{equation}
1\leq p_{1}<2\leq p_{2}\text{ and }n_{1}\leq n_{2} \tag{i}\label{i}%
\end{equation}
or%

\begin{equation}
1\leq p_{2}<2\leq p_{1}\text{ and }n_{1}\geq n_{2}\text{,} \tag{ii}\label{ii}%
\end{equation}

then
\[
C_{p_{1},p_{2},n_{1},n_{2}}\leq C_{2,2}\text{.}%
\]

\end{proposition}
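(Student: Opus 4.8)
The plan is to handle the mixed case (one index below 2, one at or above 2) by combining the two interpolation-style arguments already developed in Propositions~\ref{PropA} and~\ref{PropB}, applied one variable at a time.

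Consider case~(i): $1\le p_1<2\le p_2$ and $n_1\le n_2$. I would start from a bilinear form $A_0\colon\ell_2^{n_1}\times\ell_2^{n_2}\to\mathbb{K}$ with $\pm1$ coefficients realizing the $C_{2,2}$ bound,
\[
\left\Vert A_0\right\Vert \le C_{2,2,n_1,n_2}\max\left\{n_1^{1/2},n_2^{1/2}\right\}=C_{2,2,n_1,n_2}\,n_2^{1/2},
\]
the last equality using $n_1\le n_2$. The target right-hand side of~(\ref{7u}) in case~(i) is
\[
\max\left\{n_1^{1/p_1^\ast}n_2^{1/2-1/p_2},\;n_2^{1/p_2^\ast}n_1^{1/2-1/p_1}\right\}
=\max\left\{n_1^{1/p_1^\ast}n_2^{1/2-1/p_2},\;n_2^{1/p_2^\ast}\right\},
\]
since $p_1<2$ forces $1/2-1/p_1<0$, collapsing that term's $n_1$-exponent to $0$ (via the $\max\{\cdot,0\}$ in the true statement of~(\ref{7u})), and then $n_1\le n_2$ makes $n_2^{1/p_2^\ast}$ dominate. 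The strategy is to treat the second variable (index $p_2\ge2$) exactly as in Proposition~\ref{PropA}, rescaling $y$ by $n_2^{1/p_2-1/2}$ to pass from $\ell_2^{n_2}$ to $\ell_{p_2}^{n_2}$ via H\"older, and to treat the first variable (index $p_1<2$) by a Riesz--Thorin interpolation in $x$ as in Proposition~\ref{PropB}, interpolating between the $\ell_2^{n_1}$ estimate and the trivial $\ell_1^{n_1}$ estimate $\left\Vert A_1\right\Vert=n_2$ that one gets by summing over $j$.

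Concretely, I would first fix the $y$-variable: applying the H\"older rescaling $w=n_2^{1/p_2-1/2}y$ shows that the bilinear form $B\colon\ell_2^{n_1}\times\ell_{p_2}^{n_2}\to\mathbb{K}$ with the same coefficients satisfies $\left\Vert B\right\Vert\le n_2^{1/2-1/p_2}\left\Vert A_0\right\Vert\le C_{2,2,n_1,n_2}\,n_2^{1-1/p_2}=C_{2,2,n_1,n_2}\,n_2^{1/p_2^\ast}$. I would then record the endpoint bound for the first variable at $p_1=1$: the form $B^{(1)}\colon\ell_1^{n_1}\times\ell_{p_2}^{n_2}$ with the same coefficients has $\left\Vert B^{(1)}\right\Vert=\sup_i\left\Vert(\pm1)_j\right\Vert_{\ell_{p_2}^{n_2}\,'}=n_2^{1/p_2^\ast}$, the norm of a single row. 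Riesz--Thorin interpolation in the $x$-variable between the $\ell_2$ endpoint (exponent $\tfrac12$) and the $\ell_1$ endpoint (exponent $0$) then yields, for $1/p_1=(1-\theta)/2+\theta$,
\[
\left\Vert A\right\Vert\le\left(C_{2,2,n_1,n_2}\,n_2^{1/p_2^\ast}\right)^{1-\theta}\left(n_2^{1/p_2^\ast}\right)^{\theta}
=C_{2,2,n_1,n_2}^{\,1-\theta}\;n_2^{1/p_2^\ast},
\]
and since $1-\theta=2/p_1^\ast\le1$ and $C_{2,2}\ge1$, the constant in front is at most $C_{2,2,n_1,n_2}\le C_{2,2}$. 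This gives $\left\Vert A\right\Vert\le C_{2,2}\,n_2^{1/p_2^\ast}$, which is exactly $C_{2,2}$ times the dominant term in the target, establishing $C_{p_1,p_2,n_1,n_2}\le C_{2,2}$ in case~(i). Case~(ii) is identical after swapping the roles of the two variables.

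The main obstacle I anticipate is organizational rather than analytic: one must verify carefully that under the hypotheses $p_1<2$ and $n_1\le n_2$ the target maximum in~(\ref{7u}) really does collapse to the single term $n_2^{1/p_2^\ast}$, so that the bound produced by the interpolation step matches the claimed right-hand side and no spurious power of $n_1$ appears. Checking that the $\ell_1$-endpoint norm is computed against the correct dual exponent on the $y$-side (so that the two endpoints feed a clean Riesz--Thorin application), and confirming that the interpolation exponent $\theta$ satisfies $1-\theta=2/p_1^\ast$, are the places where sign and reciprocal bookkeeping could go wrong.
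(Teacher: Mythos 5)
Your argument is correct over the complex field and reaches the right bound, but it takes a genuinely different and heavier route than the paper's, which uses no interpolation at all. The paper's proof of case (i) takes a $\pm1$ form $A_0\colon\ell_2^{n_1}\times\ell_{p_2}^{n_2}\to\mathbb{K}$ with $\Vert A_0\Vert\leq C_{2,p_2,n_1,n_2}\,n_2^{1/p_2^{\ast}}$ (the max in (\ref{7u}) collapses to $n_2^{1/p_2^{\ast}}$ exactly as you verify), controls $C_{2,p_2,n_1,n_2}\leq C_{2,2}$ by Proposition \ref{PropA}, and then simply observes that since $p_1<2$ the closed unit ball of $\ell_{p_1}^{n_1}$ sits inside that of $\ell_{2}^{n_1}$, so the same form read on $\ell_{p_1}^{n_1}\times\ell_{p_2}^{n_2}$ has $\Vert A\Vert\leq\Vert A_0\Vert$. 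Your H\"older rescaling in the second variable reproduces what Proposition \ref{PropA} already supplies, and your Riesz--Thorin step in the first variable is correct but unnecessary: the trivial inclusion of unit balls already carries the $\ell_2^{n_1}$ bound down to $\ell_{p_1}^{n_1}$, and you discard the marginal gain $C_{2,2}^{2/p_1^{\ast}}$ anyway. The one substantive cost of your route is the scalar field: Riesz--Thorin with constant $1$ needs $\mathbb{K}=\mathbb{C}$ (which is precisely why Proposition \ref{PropB} carries that hypothesis), whereas Proposition \ref{PropC} is stated for both fields and the paper's proof is field-agnostic; for $\mathbb{K}=\mathbb{R}$ your argument only yields that the real constant $C_{p_1,p_2,n_1,n_2}$ is dominated by the \emph{complex} $C_{2,2}$, which suffices for the paper's final numerical bounds but is formally weaker than the statement. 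A small internal inconsistency: your first description of the $\ell_1$ endpoint (``$\Vert A_1\Vert=n_2$ by summing over $j$'') is the $\ell_1\times\ell_\infty$ norm, whereas the endpoint you actually need, and correctly compute afterwards, is the $\ell_1^{n_1}\times\ell_{p_2}^{n_2}$ norm $n_2^{1/p_2^{\ast}}$.
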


\begin{proof}
Let us prove (\ref{i}); the proof of (\ref{ii}) is analogous.

Note that, in this case,%
\[
\max\left\{  n_{1}^{1/p_{1}^{\ast}}n_{2}^{\max\left\{  \frac{1}{2}-\frac
{1}{p_{2}},0\right\}  },n_{2}^{1/p_{2}^{\ast}}n_{1}^{\max\left\{  \frac{1}%
{2}-\frac{1}{p_{1}},0\right\}  }\right\}  =n_{2}^{1/p_{2}^{\ast}}\text{.}%
\]
Let $A_{0}\colon\ell_{2}^{n_{1}}\times\ell_{p_{2}}^{n_{2}}\rightarrow
\mathbb{K}$ be a bilinear form of the type
\[%
{\textstyle\sum\limits_{i=1}^{n_{1}}}
{\textstyle\sum\limits_{j=1}^{n_{2}}}
\pm x_{i}y_{j}%
\]
such that%
\[
\left\Vert A_{0}\right\Vert \leq C_{2,p_{2},n_{1},n_{2}}\max\left\{
n_{1}^{1/2}n_{2}^{\frac{1}{2}-\frac{1}{p_{2}}},n_{2}^{1/p_{2}^{\ast}}\right\}
=C_{2,p_{2},n_{1},n_{2}}n_{2}^{1/p_{2}^{\ast}}\text{.}%
\]
Let us define $A\colon\ell_{p_{1}}^{n_{1}}\times\ell_{p_{2}}^{n_{2}%
}\rightarrow\mathbb{K}$ by $A\left(  x,y\right)  =A_{0}\left(  x,y\right)  $.
It is obvious that $\left\Vert A\right\Vert \leq\left\Vert A_{0}\right\Vert $
and, from Proposition \ref{PropA},%
\[
\left\Vert A_{0}\right\Vert \leq C_{2,p_{2},n_{1},n_{2}}n_{2}^{1/p_{2}^{\ast}%
}\leq C_{2,p_{2}}n_{2}^{1/p_{2}^{\ast}}\leq C_{2,2}n_{2}^{1/p_{2}^{\ast}%
}\text{.}%
\]

\end{proof}

\begin{proposition}
\label{PropD}If%
\begin{equation}
1\leq p_{1}\leq p_{2}\leq2\text{ and }n_{1}\leq n_{2} \tag{i}\label{aa}%
\end{equation}
or%
\begin{equation}
1\leq p_{2}\leq p_{1}\leq2\text{ and }n_{1}\geq n_{2}\text{,} \tag{ii}%
\label{bb}%
\end{equation}

\noindent then
\[
C_{p_{1},p_{2},n_{1},n_{2}}\leq C_{2,2}\text{.}%
\]

\end{proposition}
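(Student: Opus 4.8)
The plan is to prove case (\ref{aa}) by comparison with the diagonal problem at exponent $p_{2}$, and then to close the estimate with Proposition \ref{PropB}; case (\ref{bb}) follows from the symmetric argument exchanging the two variables. First I would simplify the right-hand maximum defining $C_{p_{1},p_{2},n_{1},n_{2}}$. Under (\ref{aa}) we have $p_{1},p_{2}\leq 2$, so $\max\{\frac{1}{2}-\frac{1}{p_{1}},0\}=\max\{\frac{1}{2}-\frac{1}{p_{2}},0\}=0$ and the maximum collapses to $\max\{n_{1}^{1/p_{1}^{\ast}},n_{2}^{1/p_{2}^{\ast}}\}$. Since $p_{1}\leq p_{2}$ gives $1/p_{1}^{\ast}\leq 1/p_{2}^{\ast}$ and $n_{1}\leq n_{2}$, we get $n_{1}^{1/p_{1}^{\ast}}\leq n_{2}^{1/p_{1}^{\ast}}\leq n_{2}^{1/p_{2}^{\ast}}$, so the target bound is $C_{p_{1},p_{2},n_{1},n_{2}}\,n_{2}^{1/p_{2}^{\ast}}$. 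It is worth noting that $n_{2}^{1/p_{2}^{\ast}}<n_{2}^{1/2}$ when $p_{2}<2$, which is precisely why a direct comparison with a $(2,2)$-form is too wasteful: such a form only delivers the exponent $1/2$, and the right intermediate object is instead the diagonal form at exponent $p_{2}$.

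Next I would take a $\pm 1$ bilinear form $A_{0}\colon\ell_{p_{2}}^{n_{1}}\times\ell_{p_{2}}^{n_{2}}\to\mathbb{K}$ realizing the diagonal estimate,
\[
\left\Vert A_{0}\right\Vert _{\ell_{p_{2}}^{n_{1}}\times\ell_{p_{2}}^{n_{2}}}\leq C_{p_{2},p_{2},n_{1},n_{2}}\max\{n_{1}^{1/p_{2}^{\ast}},n_{2}^{1/p_{2}^{\ast}}\}=C_{p_{2},p_{2},n_{1},n_{2}}\,n_{2}^{1/p_{2}^{\ast}},
\]
and define $A\colon\ell_{p_{1}}^{n_{1}}\times\ell_{p_{2}}^{n_{2}}\to\mathbb{K}$ by the same coefficient pattern. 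The key step is that $p_{1}\leq p_{2}$ yields $\left\Vert x\right\Vert_{p_{2}}\leq\left\Vert x\right\Vert_{p_{1}}$, so the unit ball of $\ell_{p_{1}}^{n_{1}}$ is contained in that of $\ell_{p_{2}}^{n_{1}}$; taking the supremum over the smaller set can only decrease the norm, whence $\left\Vert A\right\Vert_{\ell_{p_{1}}^{n_{1}}\times\ell_{p_{2}}^{n_{2}}}\leq\left\Vert A_{0}\right\Vert_{\ell_{p_{2}}^{n_{1}}\times\ell_{p_{2}}^{n_{2}}}$. Consequently $A$ witnesses $C_{p_{1},p_{2},n_{1},n_{2}}\leq C_{p_{2},p_{2},n_{1},n_{2}}$.

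Finally I would bound $C_{p_{2},p_{2},n_{1},n_{2}}\leq C_{p_{2},p_{2}}\leq C_{2,2}$, the last inequality being Proposition \ref{PropB} (stated for complex scalars, hence valid a fortiori for real scalars), which gives $C_{p_{1},p_{2},n_{1},n_{2}}\leq C_{2,2}$ as desired. The argument uses only the orderings of the exponents and of $n_{1},n_{2}$, so I do not expect a genuine difficulty; the one place that demands care is the exponent bookkeeping of the first step, namely checking that the collapsed maximum is really $n_{2}^{1/p_{2}^{\ast}}$ and not $n_{2}^{1/2}$, since it is exactly this tightness that lets the comparison with the diagonal $p_{2}$-form be converted into the claimed bound through Proposition \ref{PropB}.
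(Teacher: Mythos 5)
Your proof is correct and follows essentially the same route as the paper: both reduce the right-hand maximum to $n_{2}^{1/p_{2}^{\ast}}$, reuse the coefficient pattern of a diagonal $\pm 1$ form on $\ell_{p_{2}}^{n_{1}}\times\ell_{p_{2}}^{n_{2}}$, observe that shrinking the first unit ball (since $p_{1}\leq p_{2}$) can only decrease the norm, and then invoke Proposition \ref{PropB}. The only difference is that you spell out the unit-ball inclusion and the exponent bookkeeping that the paper leaves as ``obvious'' and ``simple to check.''
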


\begin{proof}
Let us (again) just prove (\ref{aa}). It is simple to check that%
\[
\max\left\{  n_{1}^{1/p_{1}^{\ast}}n_{2}^{\max\left\{  \frac{1}{2}-\frac
{1}{p_{2}},0\right\}  },n_{2}^{1/p_{2}^{\ast}}n_{1}^{\max\left\{  \frac{1}%
{2}-\frac{1}{p_{1}},0\right\}  }\right\}  =n_{2}^{1/p_{2}^{\ast}}\text{.}%
\]
Let $A_{0}\colon\ell_{p_{2}}^{n_{1}}\times\ell_{p_{2}}^{n_{2}}\rightarrow
\mathbb{K}$ be a bilinear form of the type%
\[
A_{0}\left(  x,y\right)  =%
{\textstyle\sum\limits_{i=1}^{n_{1}}}
{\textstyle\sum\limits_{j=1}^{n_{2}}}
\pm x_{i}y_{j}%
\]
such that%
\[
\left\Vert A_{0}\right\Vert \leq C_{p_{2},p_{2},n_{1},n_{2}}n_{2}%
^{1/p_{2}^{\ast}}\text{.}%
\]
Let us define $A\colon\ell_{p_{1}}^{n_{1}}\times\ell_{p_{2}}^{n_{2}%
}\rightarrow\mathbb{K}$ by $A\left(  x,y\right)  =A_{0}\left(  x,y\right)  $.
Notice that%
\begin{equation}
\left\Vert A\right\Vert \leq\left\Vert A_{0}\right\Vert \leq C_{p_{2}%
,p_{2},n_{1},n_{2}}n_{2}^{1/p_{2}^{\ast}}\leq C_{p_{2},p_{2}}n_{2}%
^{1/p_{2}^{\ast}}\text{.} \label{He}%
\end{equation}
From (\ref{He}) and Proposition \ref{PropB} we conclude that%
\[
C_{p_{1},p_{2},n_{1},n_{2}}\leq C_{p_{2},p_{2}}\leq C_{2,2}\text{.}%
\]

\end{proof}

An immediate consequence of Propositions \ref{PropA}, \ref{PropB}, \ref{PropC}
and \ref{PropD} is:

\begin{corollary}
\label{0aa}If $\mathbb{K=C}$, then
\[
C_{2,2}\geq\left\{
\begin{array}
[c]{ll}%
C_{p_{1},p_{2}}\text{,} & \text{if }p_{1},p_{2}\in\left[  2,\infty\right]
\text{,}\\
& \vspace{-0.3cm}\\
C_{p,p}\text{,} & \text{if }p\in\left[  1,\infty\right]  \text{.}\\
& \vspace{-0.3cm}\\
C_{p_{1},p_{2},n_{1},n_{2}}\text{,} & \text{if }p_{1},p_{2},n_{1},n_{2}\text{
are as in \emph{Propositions \ref{PropC}} or \emph{\ref{PropD}}}\emph{.}%
\end{array}
\right.
\]

\end{corollary}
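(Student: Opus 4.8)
The plan is to assemble the three assertions of the corollary directly from the four preceding propositions, since each regime listed is literally the conclusion of one of them (or of the ``in particular'' clause of one of them). No new estimate is needed; the work is entirely a matter of matching hypotheses and keeping track of the direction of each inequality.

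For the first regime, assuming $p_{1},p_{2}\in\left[  2,\infty\right]$, I would invoke Proposition \ref{PropA} with the choice $p=2$. Since $2\leq\min\{p_{1},p_{2}\}$ holds automatically in this range, the ``in particular'' part of Proposition \ref{PropA} yields $C_{p_{1},p_{2}}\leq C_{2,2}$, which is exactly $C_{2,2}\geq C_{p_{1},p_{2}}$. For the second regime, $p\in\left[  1,\infty\right]$, the inequality $C_{p,p}\leq C_{2,2}$ is precisely the statement of Proposition \ref{PropB}; here the standing hypothesis $\mathbb{K}=\mathbb{C}$ is what licenses the Riesz--Thorin step used to prove that proposition, so it must be retained. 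For the third regime, whenever $p_{1},p_{2},n_{1},n_{2}$ fall under one of the hypotheses (i) or (ii) of Proposition \ref{PropC} or of Proposition \ref{PropD}, the corresponding proposition gives $C_{p_{1},p_{2},n_{1},n_{2}}\leq C_{2,2}$ immediately.

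Because there are no computations to carry out, the only point requiring genuine care is bookkeeping. I would check that the four propositions jointly cover exactly the three cases as stated, that the inequalities all point in the direction needed to read them as lower bounds on $C_{2,2}$, and that the complex-scalar assumption is propagated to the second case (it is not required for the first or third, which hold for both $\mathbb{K}=\mathbb{R}$ and $\mathbb{K}=\mathbb{C}$). This verification is the main ``obstacle,'' such as it is: it is purely organizational and poses no analytic difficulty, which is why the author can legitimately call the corollary an immediate consequence.
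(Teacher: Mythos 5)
Your proof is correct and is exactly the paper's (implicit) argument: the corollary is presented there as an immediate consequence of Propositions \ref{PropA}, \ref{PropB}, \ref{PropC} and \ref{PropD}, matched case by case just as you do. One small caveat on a side remark: your claim that the hypothesis $\mathbb{K}=\mathbb{C}$ is not needed in the third case is not quite right for the subcase governed by Proposition \ref{PropD}, whose proof passes through Proposition \ref{PropB} and hence through the Riesz--Thorin step.
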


\section{The constants of Bennett's inequality are bounded by $\sqrt{8/5}%
$\label{Sec3}}

We recall that a Hadamard matrix of order $n$ is a square matrix
$\mathbf{H=}\left[  h_{ij}\right]  _{n\times n}$, with $h_{ij}\in\left\{
-1,1\right\}  $ for all $i,j$, such that
\[
\mathbf{HH}^{\top}=n\mathbf{I}_{n}\text{,}%
\]
where $\mathbf{I}_{n}$ is the identity matrix of order $n$ and $\mathbf{H}%
^{\top}$ is the transpose of $\mathbf{H}$. Thus, if $u_{1},\ldots,u_{n}$ are
the rows of $\mathbf{H}$, then their inner product is
\[
\left\langle u_{i},u_{j}\right\rangle =n\delta_{ij}%
\]
for all $i,j$, where $\delta_{ij}$ denotes the Kronecker delta. It is
well-known that there are Hadamard matrices of order $4k$ with $k$ up to
$166$. It is also known that if $p,q$ are orders of Hadamard matrices, then
$pq$ is also an order of a Hadamard matrix.

\begin{lemma}
\label{LemaFran}For each $n_{1},n_{2}\in\mathbb{N}$, let $r$ be the order of a
Hadamard matrix $\left[  h_{ij}\right]  $ such that $\max\left\{  n_{1}%
,n_{2}\right\}  \leq r$. Then,%
\[
C_{2,2,n_{1},n_{2}}\leq\left(  \frac{r}{\max\left\{  n_{1},n_{2}\right\}
}\right)  ^{1/2}\text{.}%
\]

\end{lemma}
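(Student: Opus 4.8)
The plan is to bound $C_{2,2,n_1,n_2}$ by exhibiting a single explicit bilinear form with $\pm1$ coefficients whose norm is controlled by a Hadamard matrix of order $r\ge\max\{n_1,n_2\}$. Since $C_{2,2,n_1,n_2}$ is the \emph{smallest} constant for which \emph{some} $\pm1$ bilinear form satisfies the defining inequality, it suffices to produce one good candidate and estimate its norm. Without loss of generality assume $n_1\le n_2$, so that $\max\{n_1,n_2\}=n_2$ and the right-hand side of the defining inequality reads $\max\{n_1^{1/2},n_2^{1/2}\}=n_2^{1/2}$.

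First I would fix a Hadamard matrix $\mathbf{H}=[h_{ij}]$ of order $r$ with $n_2\le r$, and define $A\colon\ell_2^{n_1}\times\ell_2^{n_2}\to\mathbb{K}$ by truncating $\mathbf{H}$ to its first $n_1$ rows and first $n_2$ columns, i.e.\ $A(x,y)=\sum_{i=1}^{n_1}\sum_{j=1}^{n_2}h_{ij}\,x_i y_j$. This is a legitimate $\pm1$ bilinear form. The key step is to estimate $\|A\|$. Writing $u_1,\dots,u_{n_1}$ for the truncated rows (each a vector in $\mathbb{K}^{n_2}$ obtained by restricting a row of $\mathbf{H}$ to its first $n_2$ entries), we have $A(x,y)=\sum_{i=1}^{n_1}x_i\langle u_i,y\rangle$, so by Cauchy--Schwarz in the $x$-variable,
\[
\left\vert A(x,y)\right\vert\le \|x\|_2\left(\sum_{i=1}^{n_1}\left\vert\langle u_i,y\rangle\right\vert^2\right)^{1/2}.
\]

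The main obstacle — and the heart of the lemma — is controlling $\sum_{i=1}^{n_1}|\langle u_i,y\rangle|^2$ for $\|y\|_2\le1$. I would handle this by completing the truncated rows back to full Hadamard rows: the full rows $\tilde u_1,\dots,\tilde u_r\in\mathbb{K}^r$ are orthogonal with $\|\tilde u_i\|_2^2=r$, hence $\{\tilde u_i/\sqrt r\}$ is an orthonormal basis of $\mathbb{K}^r$, giving Parseval's identity $\sum_{i=1}^r|\langle \tilde u_i,\tilde y\rangle|^2=r\|\tilde y\|_2^2$ for any $\tilde y\in\mathbb{K}^r$. Embedding $y\in\mathbb{K}^{n_2}$ into $\mathbb{K}^r$ by padding with zeros makes $\langle u_i,y\rangle=\langle\tilde u_i,\tilde y\rangle$ for each $i$, and $\|\tilde y\|_2=\|y\|_2$. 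Thus
\[
\sum_{i=1}^{n_1}\left\vert\langle u_i,y\rangle\right\vert^2\le\sum_{i=1}^{r}\left\vert\langle\tilde u_i,\tilde y\rangle\right\vert^2=r\|y\|_2^2\le r.
\]
Combining the two displays yields $\|A\|\le r^{1/2}$, so $C_{2,2,n_1,n_2}\,n_2^{1/2}\le\|A\|\le r^{1/2}$, whence $C_{2,2,n_1,n_2}\le (r/n_2)^{1/2}=(r/\max\{n_1,n_2\})^{1/2}$, as claimed. The case $n_1\ge n_2$ is symmetric, bounding in the $y$-variable instead.
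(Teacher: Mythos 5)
Your proof is correct and follows essentially the same route as the paper: both use the truncated Hadamard matrix as the witness, apply Cauchy--Schwarz in one variable, and then exploit the orthogonality of the Hadamard rows to bound the remaining quadratic form by $r$. The only cosmetic differences are that you apply Cauchy--Schwarz in the $x$-variable and invoke Bessel/Parseval, whereas the paper works in the $y$-variable and expands the square using $\langle u_i,u_k\rangle=r\delta_{ik}$ directly --- the same computation in different clothing.
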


\begin{proof}
Let us define $A\colon\ell_{2}^{n_{1}}\times\ell_{2}^{n_{2}}\rightarrow
\mathbb{K}$ by%
\[
A\left(  x,y\right)  =%
{\textstyle\sum\limits_{i=1}^{n_{1}}}
{\textstyle\sum\limits_{j=1}^{n_{2}}}
h_{ij}x_{i}y_{j}\text{.}%
\]
Let $x=\left(  x_{1},\ldots,x_{n_{1}}\right)  \in\ell_{2}^{n_{1}}$ and
$y=\left(  y_{1},\ldots,y_{n_{2}}\right)  \in\mathbb{\ell}_{2}^{n_{2}}$ be
such that $\left\Vert x\right\Vert _{2}\leq1$ and $\left\Vert y\right\Vert
_{2}\leq1$. If%
\[
\left\{
\begin{array}
[c]{l}%
u=\left(  u_{1},\ldots,u_{r}\right)  =\left(  x_{1},\ldots,x_{n_{1}}%
,0,\ldots0\right)  \text{,}\\
\vspace{-0.3cm}\\
v=\left(  v_{1},\ldots,v_{r}\right)  =\left(  y_{1},\ldots,y_{n_{2}}%
,0,\ldots0\right)  \text{,}%
\end{array}
\right.
\]
then, $\left\Vert u\right\Vert _{2},\left\Vert v\right\Vert _{2}\leq1$ and, by
the Cauchy-Schwarz inequality, we have%
\begin{align*}
\left\vert A\left(  x,y\right)  \right\vert  &  =\left\vert
{\textstyle\sum\limits_{i=1}^{n_{1}}}
\,%
{\textstyle\sum\limits_{j=1}^{n_{2}}}
h_{ij}x_{i}y_{j}\right\vert =\left\vert
{\textstyle\sum\limits_{j=1}^{r}}
\,%
{\textstyle\sum\limits_{i=1}^{r}}
h_{ij}u_{i}v_{j}\right\vert \\
&  \leq\left(
{\textstyle\sum\limits_{j=1}^{r}}
\left\vert v_{j}\right\vert ^{2}\right)  ^{1/2}\left(
{\textstyle\sum\limits_{j=1}^{r}}
\left\vert
{\textstyle\sum\limits_{i=1}^{r}}
h_{ij}u_{i}\right\vert ^{2}\right)  ^{1/2}\\
&  \leq\left(
{\textstyle\sum\limits_{j=1}^{r}}
\,%
{\textstyle\sum\limits_{i,k=1}^{r}}
h_{ij}h_{kj}u_{i}\overline{u_{k}}\right)  ^{1/2}\\
&  =\left(
{\textstyle\sum\limits_{i,k=1}^{r}}
u_{i}\overline{u_{k}}r\delta_{ik}\right)  ^{1/2}\\
&  \leq\left(  \frac{r}{\max\left\{  n_{1},n_{2}\right\}  }\right)  ^{1/2}%
\max\left\{  n_{1}^{1/2},n_{2}^{1/2}\right\}  \text{.}%
\end{align*}

\end{proof}

\begin{remark}
\label{0b}Let $n_{1},n_{2}\in\mathbb{N}$. If $n=\max\left\{  n_{1}%
,n_{2}\right\}  $, then $C_{2,2,n_{1},n_{2}}\leq C_{2,2,n,n}$ and, as a
consequence,
\begin{equation}
\sup\left\{  C_{2,2,n,n}:n\in\mathbb{N}\right\}  =\sup\left\{  C_{2,2,n_{1}%
,n_{2}}:n_{1},n_{2}\in\mathbb{N}\right\}  =C_{2,2}\text{.} \label{megasena}%
\end{equation}
In fact, let $A\colon\ell_{2}^{n}\times\ell_{2}^{n}\rightarrow\mathbb{K}$ a
bilinear form of the type%
\[
A\left(  x,y\right)  =%
{\textstyle\sum\limits_{i=1}^{n}}
{\textstyle\sum\limits_{j=1}^{n}}
\pm x_{i}y_{j}%
\]
such that
\[
\left\Vert A\right\Vert \leq C_{2,2,n,n}n^{1/2}\text{.}%
\]
Considering the bilinear form $A_{0}\colon\ell_{2}^{n_{1}}\times\ell
_{2}^{n_{2}}\rightarrow\mathbb{K}$ given by
\[
A_{0}\left(  x,y\right)  =A\left(  \left(  x_{1},\ldots,x_{n_{1}}%
,0,\ldots,0\right)  ,\left(  y_{1},\ldots,y_{n_{2}},0,\ldots,0\right)
\right)
\]
it is obvious that
\[
\left\Vert A_{0}\right\Vert \leq\left\Vert A\right\Vert \leq C_{2,2,n,n}%
n^{1/2}=C_{2,2,n,n}\max\left\{  n_{1}^{1/2},n_{2}^{1/2}\right\}  .
\]
Hence $C_{2,2,n_{1},n_{2}}\leq C_{2,2,n,n}$ and \emph{(\ref{megasena})}
follows straightforwardly.
\end{remark}

Now we are able to prove the main result of this section.

\begin{theorem}
\label{bbnn}If $p_{1},p_{2},p\in\left[  1,\infty\right]  $, then%
\[
\sqrt{8/5}\geq\left\{
\begin{array}
[c]{ll}%
C_{p_{1},p_{2}}\text{,} & \text{if }p_{1},p_{2}\in\left[  2,\infty\right]
\text{,}\\
& \vspace{-0.3cm}\\
C_{p,p}\text{,} & \text{if }p\in\left[  1,\infty\right]  \text{.}\\
& \vspace{-0.3cm}\\
C_{p_{1},p_{2},n_{1},n_{2}}\text{,} & \text{if }p_{1},p_{2},n_{1},n_{2}\text{
are as in \emph{Propositions \ref{PropC}} or \emph{\ref{PropD}}.}%
\end{array}
\right.
\]
\bigskip
\end{theorem}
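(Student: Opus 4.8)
The plan is to reduce everything to the single case $C_{2,2}$ and then bound $C_{2,2}$ using Hadamard matrices. By Corollary \ref{0aa}, all three quantities on the right-hand side are dominated by $C_{2,2}$ (for complex scalars, hence also for real scalars), so it suffices to prove $C_{2,2}\leq\sqrt{8/5}$. By Remark \ref{0b}, we have $C_{2,2}=\sup\{C_{2,2,n,n}:n\in\mathbb{N}\}$, so the whole problem collapses to estimating $C_{2,2,n,n}$ uniformly in $n$.

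The key tool is Lemma \ref{LemaFran}: for each $n$, if $r$ is the order of a Hadamard matrix with $r\geq n$, then $C_{2,2,n,n}\leq(r/n)^{1/2}$. The strategy is therefore to show that for every positive integer $n$ one can always find a Hadamard matrix of order $r$ with $n\leq r$ and $r/n\leq 8/5$; taking square roots then yields $C_{2,2,n,n}\leq\sqrt{8/5}$ for all $n$, and passing to the supremum gives $C_{2,2}\leq\sqrt{8/5}$.

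The main obstacle, and the heart of the argument, is this number-theoretic/combinatorial claim: the known orders of Hadamard matrices are dense enough that the gaps never exceed a factor of $8/5$. First I would recall the facts listed before Lemma \ref{LemaFran}, namely that Hadamard matrices exist in all orders $4k$ for $k\leq 166$ and that the set of admissible orders is closed under multiplication. The plan is to exploit the multiplicative closure: powers of $2$ give orders $2^m$, and multiplying by the small admissible orders $4k$ (which densely fill the range up to $664$) lets one tile the whole positive axis. Concretely, for a given $n$ I would locate a suitable admissible order $r$ in the interval $[n,\tfrac{8}{5}n]$ by writing $n$ in terms of an appropriate scale $2^m$ and checking that within each dyadic-type block the small admissible orders up to $664$ leave no gap larger than the ratio $8/5$. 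The verification that $8/5$ is the correct (worst-case) ratio — rather than something smaller like $3/2$ or larger — is where the careful case analysis over the small orders $4k$ enters, and this is the step I expect to require the most bookkeeping.

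Once the covering claim is established, the conclusion is immediate: for each $n$ choose such an $r$, apply Lemma \ref{LemaFran} to get $C_{2,2,n,n}\leq(r/n)^{1/2}\leq\sqrt{8/5}$, and then chain the reductions backwards through Remark \ref{0b} and Corollary \ref{0aa} to obtain all three bounds in the statement simultaneously.
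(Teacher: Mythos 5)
Your proposal follows essentially the same route as the paper: reduce via Corollary \ref{0aa} and Remark \ref{0b} to bounding $C_{2,2,n,n}$, apply Lemma \ref{LemaFran}, and then verify that every $n$ admits a Hadamard order $r$ with $r/n\leq 8/5$ by combining the orders $4k$ ($k\leq 166$) for $n\leq 664$ with the multiplicative closure (powers of $8$ times small orders) for $n\geq 665$. The only detail you leave implicit is that the extremal ratio $8/5$ is attained at $n=5$ with $r=8$, which is exactly where the paper's case analysis lands.
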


\begin{proof}
By Corollary \ref{0aa} and Remark \ref{0b} it is enough to show that
$C_{2,2,n,n}\leq\sqrt{8/5}$ for each $n\in\mathbb{N}$. From Lemma
\ref{LemaFran} we know that%
\[
\left\{
\begin{array}
[c]{l}%
C_{2,2,1,1}=1\text{,}\\
\vspace{-0.3cm}\\
C_{2,2,2,2}\leq1\text{,}\\
\vspace{-0.3cm}\\
C_{2,2,3,3}\leq\left(  4/3\right)  ^{1/2}\approx1.15\text{,}\\
\vspace{-0.3cm}\\
C_{2,2,4,4}\leq1\text{.}%
\end{array}
\right.
\]

We already know that there are Hadamard matrices of order $4k$ with $k$ up to
$166$. For $n=5,\ldots,664$, by Lemma \ref{LemaFran} we have $C_{2,2,n,n}%
\leq1$ whenever $n=4k$. If $4k+1\leq n\leq4k+3$, we have
\[
C_{2,2,n,n}\leq\left(  \dfrac{4\left(  k+1\right)  }{n}\right)  ^{1/2}%
\leq\left(  \dfrac{4\left(  k+1\right)  }{4k+1}\right)  ^{1/2}\text{.}%
\]
It is easy to see that the maximum is achieved when $k=1$ and thus
\[
C_{2,2,n,n}\leq\sqrt{8/5}%
\]
for every $n=1,2,\ldots,664$.

From now on we use the multiplicative property of Hadamard matrices: if there
are Hadamard matrices of orders $p$ and $q$, then there is a Hadamard matrix
of order $pq$. Hence, for each $p\in\left\{  1,\ldots,166\right\}  $ and
$r\in\mathbb{N}$, we know that $2^{r}\cdot4p$ is an order of some Hadamard matrix.

For each $m\in\mathbb{N}$ and each $k\in\left\{  8,\ldots,63\right\}  $ let
$A_{m,k}=\left\{  8^{m}k+1,\ldots,8^{m}\left(  k+1\right)  \right\}  $. Hence,
from Lemma \ref{LemaFran}, if $n\in A_{m,k}$, we have
\[
C_{2,2,n,n}\leq\left(  \dfrac{8^{m}\left(  k+1\right)  }{n}\right)  ^{1/2}%
\leq\left(  \dfrac{8^{m}\left(  k+1\right)  }{8^{m}k}\right)  ^{1/2}=\left(
1+\dfrac{1}{k}\right)  ^{1/2}\leq\left(  1+\dfrac{1}{8}\right)  ^{1/2}%
<\sqrt{8/5}\text{.}%
\]
Let us see that, if $n\geq665$, then $n\in A_{m,k}$ for some $\left(
m,k\right)  \in\mathbb{N}\times\left\{  8,\ldots,63\right\}  $. In fact, if
$n\geq665$, then there exists $m\geq3$ such that%
\[
8^{m-1}\cdot8=8^{m}<n\leq8^{m+1}=8^{m-1}\cdot64
\]
and so, $n\in A_{m-1,8}\cup\cdots\cup A_{m-1,63}$. Therefore,
\[
\mathbb{N=}\left\{  1,\ldots,664\right\}  \cup_{m=2}^{\infty}\cup_{k=8}%
^{63}A_{m,k}%
\]
and the proof is done.
\end{proof}

\section{Improved upper bounds for the constants of the Kahane--Salem--Zygmund
inequality\label{Sec4}}

As we have mentioned at the Introduction, Bennett's inequality is part of a
bulk of similar non-deterministic inequalities that emerged in the
$1970^{\prime}s$ (see \cite{kah,man,var}). However, except for the result of
Bennett, the other approaches have dealt with $n_{1}=\cdots=n_{m}$ and it was
just very recently that an $m$-linear approach with arbitrary $n_{1}%
,...,n_{m}$ appeared (see \cite{alb}): there exists a constant $C_{m}$ such
that, given $p_{1},\ldots,p_{m}\in\left[  1,\infty\right]  $ and $n_{1}%
,\ldots,n_{m}\in\mathbb{N}$, there is an $m$-linear form $A_{n_{1},...,n_{m}%
}\colon\ell_{p_{1}}^{n_{1}}\times\cdots\times\ell_{p_{m}}^{n_{m}}%
\rightarrow\mathbb{K}$ of the type
\[
A_{n_{1},...,n_{m}}\left(  z^{(1)},\ldots,z^{(m)}\right)  =%
{\textstyle\sum\limits_{i_{1}=1}^{n_{1}}}
\cdots%
{\textstyle\sum\limits_{i_{m}=1}^{n_{m}}}
\pm z_{i_{1}}^{(1)}\cdots z_{i_{m}}^{(m)}\text{,}%
\]
such that%
\[
\Vert A_{n_{1},...,n_{m}}\Vert\leq C_{m}\left(
{\textstyle\sum\limits_{k=1}^{m}}
n_{k}\right)  ^{\frac{1}{\min\{\max\{2,p_{1}^{\ast}\},\ldots,\max
\{2,p_{m}^{\ast}\}\}}}%
{\textstyle\prod\limits_{k=1}^{m}}
n_{k}^{\max\left\{  \frac{1}{2}-\frac{1}{p_{k}},0\right\}  }.
\]
Of course, this means that there is a constant $D_{m}\geq C_{m}$ satisfying
\[
\Vert A_{n_{1},...,n_{m}}\Vert\leq D_{m}\max_{k=1,\ldots,m}\left\{
n_{k}^{\frac{1}{\min\{\max\{2,p_{1}^{\ast}\},\ldots,\max\{2,p_{m}^{\ast}\}\}}%
}\right\}
{\textstyle\prod\limits_{k=1}^{m}}
n_{k}^{\max\left\{  \frac{1}{2}-\frac{1}{p_{k}},0\right\}  }\text{.}%
\]
In \cite{alb} the constant $C_{m}$ satisfies
\begin{equation}
C_{m}\leq\sqrt{32m\log\left(  6m\right)  }\sqrt{m!} \label{KSZ1}%
\end{equation}
and in \cite{JFA} we have shown that given $\varepsilon>0$ and a positive
integer $m$, if $p_{1}=\cdots=p_{m}=\infty$, there exists a positive integer
$N$ such that
\[
D_{m}<1+\varepsilon
\]
if we consider $n_{1},...,n_{m}>N$ (and, \textit{a fortiori}, $C_{m}%
<1+\varepsilon$ for $n_{1},...,n_{m}>N$). The next result improves
(\ref{KSZ1}) for the case $p_{1}=\cdots=p_{m}=\infty$ without restrictions on
$n_{1},...,n_{m}$:

\begin{theorem}
\label{TeoKSZ1}For each positive integer $m$ there exists an $m$-linear form
$A_{n_{1},\ldots,n_{m}}\colon\ell_{\infty}^{n_{1}}\times\cdots\times
\ell_{\infty}^{n_{m}}\rightarrow\mathbb{K}$ of the type
\[
A_{n_{1},\ldots,n_{m}}(z^{(1)},\ldots,z^{(m)})=%
{\textstyle\sum\limits_{i_{1}=1}^{n_{1}}}
\cdots%
{\textstyle\sum\limits_{i_{m}=1}^{n_{m}}}
\pm z_{i_{1}}^{(1)}\cdots z_{i_{m}}^{(m)}\text{,}%
\]
satisfying%
\[
\Vert A_{n_{1},\ldots,n_{m}}\Vert\leq2^{\frac{m+1}{2}}\max\left\{  n_{1}%
^{1/2},\ldots,n_{m}^{1/2}\right\}
{\textstyle\prod\limits_{k=1}^{m}}
n_{k}^{1/2}\text{.}%
\]

\end{theorem}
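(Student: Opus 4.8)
The plan is to convert the $\ell_\infty$-statement into a bound for the injective $\ell_2$-norm of a single symmetric $\pm1$ tensor, and to realise that tensor from one Hadamard matrix. Set $n=\max_k n_k$ and fix a Hadamard matrix $H=[h_{ij}]$ of order $r$ with $n\le r\le 2n$; such an $r$ exists because the orders $2^{s}$ are cofinal in $\mathbb{N}$ and consecutive ones differ by a factor $2$. For any $m$-linear form $A$ and any $z^{(k)}$ with $\|z^{(k)}\|_\infty\le 1$ one has $\|z^{(k)}\|_2\le n_k^{1/2}$, so by homogeneity
\[
\|A\|_{\ell_\infty^{n_1}\times\cdots\times\ell_\infty^{n_m}}\le\Big(\prod_{k=1}^{m}n_k^{1/2}\Big)\,\|A\|_{\ell_2^{n_1}\times\cdots\times\ell_2^{n_m}} .
\]
Since zeroing coordinates does not increase an injective norm, it is enough to construct one symmetric $\pm1$ tensor $E=E^{(m)}$ on $(\mathbb{R}^{r})^{\times m}$ with $\|E^{(m)}\|_{\ell_2^{r}\times\cdots\times\ell_2^{r}}\le 2^{m/2}r^{1/2}$ and then restrict it to $[n_1]\times\cdots\times[n_m]$: using $r\le 2n$ this produces exactly the constant $2^{(m+1)/2}$. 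Thus everything reduces to building a \emph{spectrally flat} $\pm1$ tensor, whose injective $\ell_2$-norm is of the order $r^{1/2}$ and not of the trivial order $r^{(m-1)/2}$ that product-type or ``hub'' tensors deliver.

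For $E^{(m)}$ I would take the higher Sylvester tensor attached to $H$: identifying $\{1,\dots,r\}$ with $\mathbb{F}_2^{s}$ (so $r=2^{s}$) and writing $h_{ij}=(-1)^{\langle i,j\rangle}$, put
\[
E^{(m)}_{i_1,\dots,i_m}=\prod_{1\le k<l\le m}h_{i_k i_l}=(-1)^{\sum_{k<l}\langle i_k,i_l\rangle}.
\]
It is symmetric, has entries $\pm1$, and enjoys the self-similarity that makes an induction possible: fixing the last index yields
\[
E^{(m)}_{i_1,\dots,i_{m-1},i_m}=E^{(m-1)}_{i_1,\dots,i_{m-1}}\prod_{k=1}^{m-1}h_{i_k i_m},
\]
so that, with $\odot$ the entrywise product and $A^{(m-1)}$ the form of $E^{(m-1)}$, the corresponding slice is $A^{(m-1)}$ at Hadamard-modulated vectors,
\[
B^{(i_m)}(z^{(1)},\dots,z^{(m-1)})=A^{(m-1)}\big(h_{\cdot\,i_m}\odot z^{(1)},\dots,h_{\cdot\,i_m}\odot z^{(m-1)}\big).
\]
Because modulation by a $\pm1$ vector preserves every $\ell_2$-norm, each slice obeys the same bound as $E^{(m-1)}$.

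The induction starts from the exact peeling identity (Cauchy--Schwarz in the last variable, sharp since the extremal $z^{(m)}$ is the unit vector along $(B^{(i_m)}(z'))_{i_m}$):
\[
\|E^{(m)}\|_2^{\,2}=\sup_{\|z^{(k)}\|_2\le 1,\;k<m}\ \sum_{i=1}^{r}\big|A^{(m-1)}\big(h_{\cdot\,i}\odot z^{(1)},\dots,h_{\cdot\,i}\odot z^{(m-1)}\big)\big|^{2}.
\]
Writing $\sigma(i_1,\dots,i_{m-1})=i_1\oplus\cdots\oplus i_{m-1}$ and using $HH^{\top}=rI$, the orthogonality of the columns of $H$ collapses the right-hand side to $r\sum_{t\in\mathbb{F}_2^{s}}|P_t(z')|^2$, where $P_t$ is the part of $A^{(m-1)}$ carried by the multi-indices with $\sigma=t$. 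If one can establish the flatness estimate
\[
\sup_{\|z^{(k)}\|_2\le1}\ \sum_{i=1}^{r}\big|A^{(m-1)}(h_{\cdot\,i}\odot z')\big|^{2}\ \le\ 2\,\|E^{(m-1)}\|_2^{\,2},
\]
then $\|E^{(m)}\|_2^2\le 2\|E^{(m-1)}\|_2^2$, and since $E^{(1)}$ is the $\pm1$ linear form with $\|E^{(1)}\|_2=r^{1/2}$ the recursion gives $\|E^{(m)}\|_2\le 2^{(m-1)/2}r^{1/2}\le 2^{m/2}r^{1/2}$, as required.

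I expect this last flatness estimate to be the whole difficulty. The trivial bound replaces the sum of the $r$ modulated values by $r$ times their maximum and only yields $\|E^{(m)}\|_2^2\le r\|E^{(m-1)}\|_2^2$, i.e. the useless $r^{(m-1)/2}$; improving the factor $r$ to a factor $2$ is precisely the assertion that the modulations $h_{\cdot\,i}\odot z'$ spread the value of $A^{(m-1)}$ almost orthogonally over $i$, with no concentration even when some $z^{(k)}$ is a coordinate vector $e_j$ (the case that destroys the hub construction). Concretely it amounts to a uniform Gauss/character-sum bound for the quadratic form $\sum_{k<l}\langle i_k,i_l\rangle$ after several variables are restricted, and the constant $2$ should come out of the bounded defect of that form under such restrictions. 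The other ingredients---the reduction in the first paragraph, the slice identity, and the peeling identity---are routine.
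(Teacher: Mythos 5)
Your argument reduces the theorem to the claim that the ``complete-graph'' tensor $E^{(m)}_{i_1,\dots,i_m}=\prod_{k<l}h_{i_ki_l}$ has injective $\ell_2$-norm at most $2^{m/2}r^{1/2}$, and the only route you offer to that claim is the ``flatness estimate'' which you explicitly leave unproven. That is already a fatal gap, since everything else in your write-up (the $\ell_\infty\to\ell_2$ rescaling, the slice identity, the Cauchy--Schwarz peeling) is routine and the flatness estimate is the entire content. Worse, the estimate is \emph{false} for your tensor: no constant independent of $r$ can replace the factor $r$. Take $m=3$, $r=2^s$, identify indices with $\mathbb{F}_2^s$ so that $h_{ij}=(-1)^{\langle i,j\rangle}$, and test $E^{(3)}$ against the unit quadratic-phase vector $z=r^{-1/2}\bigl((-1)^{q(v)}\bigr)_{v\in\mathbb{F}_2^s}$ with $q(v)=\sum_{a<b}v_av_b$. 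Writing $j=i+v$ and using $q(i)+q(i+v)=q(v)+|i|\,|v|+\langle i,v\rangle \pmod 2$ and $\langle i,i+v\rangle=|i|+\langle i,v\rangle$, one gets
\begin{equation*}
\sum_{i,j}E^{(3)}_{ijk}z_iz_j=\sum_{v:\,|v|\ \mathrm{odd}}(-1)^{\langle v,k\rangle+q(v)},\qquad\text{hence}\qquad\sum_{k}\Bigl|\sum_{i,j}E^{(3)}_{ijk}z_iz_j\Bigr|^{2}=r\cdot\frac{r}{2}=\frac{r^{2}}{2}.
\end{equation*}
Therefore $\|E^{(3)}\|_{\ell_2^r\times\ell_2^r\times\ell_2^r}\geq r/\sqrt{2}$, which is of the trivial order $r^{(m-1)/2}$ and dwarfs $2^{3/2}r^{1/2}$ for large $r$. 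The quadratic form $\sum_{k<l}\langle i_k,i_l\rangle$ concentrates exactly on phase vectors aligned with it, so the claimed almost-orthogonal spreading over the modulations $h_{\cdot\,i}\odot z'$ does not happen.

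The paper's proof avoids demanding spectral flatness in all $m$ variables, which is what your opening reduction $\|A\|_{\ell_\infty^{n_1}\times\cdots\times\ell_\infty^{n_m}}\leq\bigl(\prod_kn_k^{1/2}\bigr)\|A\|_{\ell_2\times\cdots\times\ell_2}$ forces on you. It uses the \emph{chain} tensor $h^{(2)}_{i_1i_2}h^{(3)}_{i_2i_3}\cdots h^{(m)}_{i_{m-1}i_m}$ and bounds it in the mixed norm $\ell_2\times\ell_\infty\times\cdots\times\ell_\infty\times\ell_2$ by $n_m^{1/2}\prod_{k=2}^{m-1}n_k^{1/2}$ (Proposition \ref{PropKSZ2} here; \cite[Lemma 2.1]{JFA} for the statement actually invoked): because the middle variables are only required to lie in the $\ell_\infty$ ball, the factors $|y^{(k)}_{i_k}|^2\leq1$ can be discarded and the Hadamard orthogonality $\sum_{i_k}h_{i_{k-1}i_k}h_{j_{k-1}i_k}=n_k\delta_{i_{k-1}j_{k-1}}$ telescopes along the chain. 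Only the two outer variables are then converted from $\ell_2$ to $\ell_\infty$, at a cost of $(n_1n_m)^{1/2}$ (Proposition \ref{LemaKSZ1}), and the constant $2^{\frac{m+1}{2}}$ in Theorem \ref{TeoKSZ1} comes solely from rounding each $n_k$ up to the next power of two. If you want to keep your outline, you must either switch to this mixed-norm accounting or produce a genuinely $\ell_2$-flat $\pm1$ tensor, which is a substantially harder object than $\prod_{k<l}h_{i_ki_l}$.
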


\begin{proof}
Notice that we can assume, without loss of generality, $n_{1}\leq\cdots\leq
n_{m}$. For each $k=1,\ldots,m$, let $t_{k}$ be the unique integer such that
\[
2^{t_{k}}<n_{k}\leq2^{t_{k}+1}%
\]

It follows from \cite[Lemma 2.1]{JFA} that there exists an $m$-linear form
$A_{0}\colon\ell_{\infty}^{2^{t_{1}+1}}\times\cdots\times\ell_{\infty
}^{2^{t_{m}+1}}\rightarrow\mathbb{K}$ with coefficients $\pm1$ such that%
\[
\left\Vert A_{0}\right\Vert \leq\left(  2^{t_{m}+1}\right)  ^{1/2}%
{\textstyle\prod\limits_{k=1}^{m}}
\left(  2^{t_{k}+1}\right)  ^{1/2}%
\]

If $n_{k}=2^{t_{k}+1}$ for all $k=1,\ldots,m$, we just consider $A_{n_{1}%
,\ldots,n_{m}}=A_{0}$. Otherwise, we define%
\begin{align*}
A_{n_{1},\ldots,n_{m}}\colon\ell_{\infty}^{n_{1}}\times\cdots\times
\ell_{\infty}^{n_{m}}  &  \rightarrow\mathbb{K}\\
\left(  \left(  z_{i_{1}}^{\left(  1\right)  }\right)  _{i_{1}=1}^{n_{1}%
},\ldots,\left(  z_{i_{m}}^{\left(  m\right)  }\right)  _{i_{m}=1}^{n_{m}%
}\right)   &  \mapsto A_{0}\left(  \left(  z_{1}^{\left(  1\right)  }%
,\ldots,z_{n_{1}}^{\left(  1\right)  },0,\ldots,0\right)  ,\ldots,\left(
z_{1}^{\left(  m\right)  },\ldots,z_{n_{m}}^{\left(  m\right)  }%
,0,\ldots,0\right)  \right)  \text{.}%
\end{align*}
Then, given $x^{\left(  k\right)  }$ in the closed unit ball of $\ell_{\infty
}^{n_{k}}$, $k=1,\ldots,m$, we have%
\begin{align*}
&  \left\vert A_{n_{1},\ldots,n_{m}}\left(  \left(  x_{i_{1}}^{\left(
1\right)  }\right)  _{i_{1}=1}^{n_{1}},\ldots,\left(  x_{i_{m}}^{\left(
m\right)  }\right)  _{i_{m}=1}^{n_{m}}\right)  \right\vert \\
&  =\left\vert A_{0}\left(  \left(  x_{1}^{\left(  1\right)  },\ldots
,x_{n_{1}}^{\left(  1\right)  },0,\ldots,0\right)  ,\ldots,\left(
x_{1}^{\left(  m\right)  },\ldots,x_{n_{m}}^{\left(  m\right)  }%
,0,\ldots,0\right)  \right)  \right\vert \\
&  \leq\left\Vert A_{0}\right\Vert \leq\left(  2^{t_{m}+1}\right)  ^{1/2}%
{\textstyle\prod\limits_{k=1}^{m}}
\left(  2^{t_{k}+1}\right)  ^{1/2}\\
&  =\frac{\left(  2^{t_{m}+1}\right)  ^{1/2}%
{\textstyle\prod\limits_{k=1}^{m}}
\left(  2^{t_{k}+1}\right)  ^{1/2}}{\max\left\{  n_{1}^{1/2},\ldots
,n_{m}^{1/2}\right\}
{\textstyle\prod\limits_{k=1}^{m}}
n_{k}^{1/2}}\max\left\{  n_{1}^{1/2},\ldots,n_{m}^{1/2}\right\}
{\textstyle\prod\limits_{j=1}^{m}}
n_{j}^{1/2}\\
&  \leq\frac{\left(  2^{t_{m}+1}\right)  ^{1/2}%
{\textstyle\prod\limits_{k=1}^{m}}
\left(  2^{t_{k}+1}\right)  ^{1/2}}{\left(  2^{t_{m}}\right)  ^{1/2}%
{\textstyle\prod\limits_{j=1}^{m}}
\left(  2^{t_{k}}\right)  ^{1/2}}\max\left\{  n_{1}^{1/2},\ldots,n_{m}%
^{1/2}\right\}
{\textstyle\prod\limits_{j=1}^{m}}
n_{j}^{1/2}\\
&  =2^{\frac{m+1}{2}}\max\left\{  n_{1}^{1/2},\ldots,n_{m}^{1/2}\right\}
{\textstyle\prod\limits_{j=1}^{m}}
n_{j}^{1/2}%
\end{align*}
and this ends the proof.
\end{proof}

Now we show that, if we consider the particular case $n_{1}=\cdots=n_{m}=n$,
the estimates given by Theorem \ref{TeoKSZ1} can be improved to $\sqrt{\left(
8/5\right)  ^{m-1}}$. For this purpose, we shall adapt some arguments and
techniques from the preceding sections.

For each $n_{1},\ldots,n_{m}\in\mathbb{N}$, let $C_{\infty,\ldots,\infty
,n_{1},\ldots,n_{m}}$ be the smallest of the constants $C$ for which there is
an $m$-linear form $A\colon\ell_{\infty}^{n_{1}}\times\cdots\times\ell
_{\infty}^{n_{m}}\rightarrow\mathbb{K}$ of the type%
\[
A(x^{(1)},...,x^{(m)})=%
{\textstyle\sum\limits_{i_{1}=1}^{n_{1}}}
\cdots%
{\textstyle\sum\limits_{i_{m}=1}^{n_{m}}}
\pm x_{i_{1}}^{(1)}\cdots x_{i_{m}}^{(m)}%
\]
such that%
\[
\Vert A\Vert\leq C\max\left\{  n_{1}^{1/2},\ldots,n_{m}^{1/2}\right\}
{\textstyle\prod\limits_{k=1}^{m}}
n_{k}^{1/2}\text{.}%
\]
Similarly, for each $n_{1},\ldots,n_{m}\in\mathbb{N}$, let $C_{2,\infty
,\ldots,\infty,2,n_{1},\ldots,n_{m}}$ be the smallest of the constants $C$ for
which there is an $m$-linear form $A\colon\ell_{2}^{n_{1}}\times\ell_{\infty
}^{n_{2}}\times\cdots\times\ell_{\infty}^{n_{m-1}}\times\ell_{2}^{n_{m}%
}\rightarrow\mathbb{K}$ of the type%
\[
A(x^{(1)},...,x^{(m)})=%
{\textstyle\sum\limits_{i_{1}=1}^{n_{1}}}
\cdots%
{\textstyle\sum\limits_{i_{m}=1}^{n_{m}}}
\pm x_{i_{1}}^{(1)}\cdots x_{i_{m}}^{(m)}%
\]
such that%
\[
\Vert A\Vert\leq C\max\left\{  n_{1}^{1/2},\ldots,n_{m}^{1/2}\right\}
{\textstyle\prod\limits_{k=2}^{m-1}}
n_{k}^{1/2}\text{.}%
\]

\begin{proposition}
\label{LemaKSZ1}Let $m$ be a positive integer and let $f\colon\mathbb{N}%
^{m}\rightarrow\lbrack0,\infty)$ be a function. If for each $\left(
n_{1},\ldots,n_{m}\right)  \in\mathbb{N}^{m}$ there exists an $m$-linear form
$A_{0}\colon\ell_{2}^{n_{1}}\times\ell_{\infty}^{n_{2}}\times\cdots\times
\ell_{\infty}^{n_{m-1}}\times\ell_{2}^{n_{m}}\rightarrow\mathbb{K}$ of the
type%
\begin{equation}
A_{0}\left(  x^{\left(  1\right)  },\ldots,x^{\left(  m\right)  }\right)  =%
{\textstyle\sum\limits_{i_{1}=1}^{n_{1}}}
\cdots%
{\textstyle\sum\limits_{i_{m}=1}^{n_{m}}}
\pm x_{i_{1}}^{\left(  1\right)  }\cdots x_{i_{m}}^{\left(  m\right)  }
\label{KSZ2}%
\end{equation}
such that%
\[
\left\Vert A_{0}\right\Vert \leq f\left(  n_{1},\ldots,n_{m}\right)  \text{,}%
\]
then, for each $\left(  n_{1},\ldots,n_{m}\right)  \in\mathbb{N}^{m}$, there
is an $m$-linear form $A\colon\ell_{\infty}^{n_{1}}\times\cdots\times
\ell_{\infty}^{n_{m}}\rightarrow\mathbb{K}$ as in \emph{(\ref{KSZ2})} such
that%
\[
\left\Vert A\right\Vert \leq\left(  n_{1}n_{m}\right)  ^{1/2}f\left(
n_{1},\ldots,n_{m}\right)  \text{.}%
\]

\end{proposition}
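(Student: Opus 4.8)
The plan is to use a single family of $\pm 1$ coefficients for both settings and to exploit the elementary comparison between the $\ell_2$- and $\ell_\infty$-norms on a finite-dimensional space. The only difference between the two $m$-linear forms appearing in the statement is the norm imposed on the first and last factors: in the hypothesis these carry the $\ell_2$-norm, whereas in the conclusion they carry the $\ell_\infty$-norm; the middle factors carry the $\ell_\infty$-norm in both cases. So the whole argument is a change-of-norms estimate performed with the coefficients held fixed.

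First I would fix $\left(n_1,\ldots,n_m\right)\in\mathbb{N}^m$ and let $A_0\colon\ell_2^{n_1}\times\ell_\infty^{n_2}\times\cdots\times\ell_\infty^{n_{m-1}}\times\ell_2^{n_m}\rightarrow\mathbb{K}$ be the $m$-linear form of the type (\ref{KSZ2}) provided by the hypothesis, so that $\left\Vert A_0\right\Vert\leq f\left(n_1,\ldots,n_m\right)$. I would then define $A\colon\ell_\infty^{n_1}\times\cdots\times\ell_\infty^{n_m}\rightarrow\mathbb{K}$ by exactly the same $\pm 1$ coefficients; thus $A$ is again of the type (\ref{KSZ2}) and, as scalars, $A(x^{(1)},\ldots,x^{(m)})=A_0(x^{(1)},\ldots,x^{(m)})$.

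The key estimate is the comparison $\left\Vert x\right\Vert_2\leq n^{1/2}\left\Vert x\right\Vert_\infty$, valid for every $x\in\mathbb{K}^n$ and immediate from $\sum_i\left\vert x_i\right\vert^2\leq n\max_i\left\vert x_i\right\vert^2$. Applying this in the first and last slots, any $x^{(1)}$ in the closed unit ball of $\ell_\infty^{n_1}$ satisfies $\left\Vert x^{(1)}\right\Vert_2\leq n_1^{1/2}$ and any $x^{(m)}$ in the closed unit ball of $\ell_\infty^{n_m}$ satisfies $\left\Vert x^{(m)}\right\Vert_2\leq n_m^{1/2}$, while the middle vectors already lie in the unit ball of the corresponding $\ell_\infty^{n_k}$. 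Using the multilinearity of $A_0$ to pull the scalars $\left\Vert x^{(1)}\right\Vert_2$ and $\left\Vert x^{(m)}\right\Vert_2$ out of the first and last arguments (after normalizing those arguments to the $\ell_2$-unit sphere, the vanishing case being trivial) and then invoking the definition of $\left\Vert A_0\right\Vert$ as the norm of a form on $\ell_2\times\ell_\infty\times\cdots\times\ell_\infty\times\ell_2$, I obtain
\[
\left\vert A(x^{(1)},\ldots,x^{(m)})\right\vert=\left\Vert x^{(1)}\right\Vert_2\left\Vert x^{(m)}\right\Vert_2\left\vert A_0\!\left(\tfrac{x^{(1)}}{\left\Vert x^{(1)}\right\Vert_2},x^{(2)},\ldots,x^{(m-1)},\tfrac{x^{(m)}}{\left\Vert x^{(m)}\right\Vert_2}\right)\right\vert\leq\left(n_1n_m\right)^{1/2}\left\Vert A_0\right\Vert.
\]
Taking the supremum over all such $x^{(k)}$ and using $\left\Vert A_0\right\Vert\leq f\left(n_1,\ldots,n_m\right)$ yields $\left\Vert A\right\Vert\leq\left(n_1n_m\right)^{1/2}f\left(n_1,\ldots,n_m\right)$, which is the desired conclusion.

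I do not expect a serious obstacle here: everything reduces to the single inequality $\left\Vert x\right\Vert_2\leq n^{1/2}\left\Vert x\right\Vert_\infty$. The only point demanding a little care is the bookkeeping in the multilinear normalization step, where the first and last arguments are rescaled simultaneously and the degenerate case in which one of them is zero must be dispatched, so that the factor $\left(n_1n_m\right)^{1/2}$ emerges exactly—one $n^{1/2}$ from each of the two slots whose norm is relaxed from $\ell_2$ to $\ell_\infty$—with no spurious constant.
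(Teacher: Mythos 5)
Your argument is correct and is essentially the proof the paper gives: both rest on the single inequality $\left\Vert x\right\Vert _{2}\leq n^{1/2}\left\Vert x\right\Vert _{\infty}$ applied to the first and last slots, the only cosmetic difference being that the paper rescales those arguments by the fixed factors $n_{1}^{-1/2}$ and $n_{m}^{-1/2}$ while you normalize by the $\ell_{2}$-norms and then bound them. No gap to report.
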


\begin{proof}
Let $A\colon\ell_{\infty}^{n_{1}}\times\cdots\times\ell_{\infty}^{n_{m}%
}\rightarrow\mathbb{K}$ be the $m$-linear form defined by%
\[
A\left(  x^{\left(  1\right)  },\ldots,x^{\left(  m\right)  }\right)
=A_{0}\left(  x^{\left(  1\right)  },\ldots,x^{\left(  m\right)  }\right)
\text{.}%
\]
Hence%
\begin{align}
\left\Vert A\right\Vert  &  =\sup_{\left\Vert x^{\left(  1\right)
}\right\Vert _{\infty},\ldots,\left\Vert x^{\left(  m\right)  }\right\Vert
_{\infty}\leq1}\left\vert
{\textstyle\sum\limits_{i_{1}=1}^{n_{1}}}
\cdots%
{\textstyle\sum\limits_{i_{m}=1}^{n_{m}}}
\pm x_{i_{1}}^{\left(  1\right)  }\cdots x_{i_{m}}^{\left(  m\right)
}\right\vert \nonumber\\
&  =\left(  n_{1}n_{m}\right)  ^{1/2}\sup_{\left\Vert x^{\left(  1\right)
}\right\Vert _{\infty},\ldots,\left\Vert x^{\left(  m\right)  }\right\Vert
_{\infty}\leq1}\left\vert
{\textstyle\sum\limits_{i_{1}=1}^{n_{1}}}
\cdots%
{\textstyle\sum\limits_{i_{m}=1}^{n_{m}}}
\pm n_{1}^{-1/2}x_{i_{1}}^{\left(  1\right)  }x_{i_{2}}^{\left(  2\right)
}\cdots x_{i_{m-1}}^{\left(  m-1\right)  }n_{m}^{-1/2}x_{i_{m}}^{\left(
m\right)  }\right\vert \text{.} \label{KSZ3}%
\end{align}
For $k\in\left\{  1,m\right\}  $, let $z^{\left(  k\right)  }=n_{k}%
^{-1/2}x^{\left(  k\right)  }$. From the H\"{o}lder inequality, since
$\left\Vert x^{\left(  k\right)  }\right\Vert _{\infty}\leq1$, we have
\[
\left\Vert z^{\left(  k\right)  }\right\Vert _{2}\leq1
\]
and, therefore, from (\ref{KSZ3}),
\begin{align*}
\left\Vert A\right\Vert  &  \leq\left(  n_{1}n_{m}\right)  ^{1/2}%
\sup_{\left\Vert z^{\left(  1\right)  }\right\Vert _{2},\left\Vert x^{\left(
2\right)  }\right\Vert _{\infty},\ldots,\left\Vert x^{\left(  m-1\right)
}\right\Vert _{\infty},\left\Vert z^{\left(  m\right)  }\right\Vert _{2}\leq
1}\left\vert
{\textstyle\sum\limits_{i_{1}=1}^{n_{1}}}
\cdots%
{\textstyle\sum\limits_{i_{m}=1}^{n_{m}}}
\pm z_{i_{1}}^{\left(  1\right)  }x_{i_{2}}^{\left(  2\right)  }\cdots
x_{i_{m-1}}^{\left(  m-1\right)  }z_{i_{m}}^{\left(  m\right)  }\right\vert \\
&  =\left(  n_{1}n_{m}\right)  ^{1/2}\left\Vert A_{0}\right\Vert \\
&  \leq\left(  n_{1}n_{m}\right)  ^{1/2}f\left(  n_{1},\ldots,n_{m}\right)
\text{,}%
\end{align*}
as we wished.
\end{proof}

\begin{proposition}
\label{PropKSZ1}Let $n_{1},\ldots,n_{m}\in\mathbb{N}$ be such that $n_{1}%
=\min\left\{  n_{1},n_{2},\ldots,n_{m}\right\}  $. Then%
\[
C_{\infty,\ldots,\infty,n_{1},\ldots,n_{m}}\leq C_{2,\infty,\ldots
,\infty,2,n_{1},\ldots,n_{m}}\text{.}%
\]

\end{proposition}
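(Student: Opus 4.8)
The plan is to derive this as an immediate consequence of Proposition \ref{LemaKSZ1}. The two constants are built from the same bound structure and differ only in two respects: the first and last slots carry the $\ell_2$-norm (rather than $\ell_\infty$), and the accompanying product runs over $k=2,\ldots,m-1$ instead of $k=1,\ldots,m$. The observation driving the proof is that these two discrepancies cancel exactly: the factor $(n_1 n_m)^{1/2}$ that Proposition \ref{LemaKSZ1} introduces when it passes from the $\ell_2\times\ell_\infty\times\cdots\times\ell_\infty\times\ell_2$ setting to the all-$\ell_\infty$ setting is precisely what upgrades $\prod_{k=2}^{m-1} n_k^{1/2}$ to $\prod_{k=1}^{m} n_k^{1/2}$.

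Concretely, first I fix $(n_1,\ldots,n_m)$ and unwind the definition of $C_{2,\infty,\ldots,\infty,2,n_1,\ldots,n_m}$. Since a $\pm1$ $m$-linear form is specified by finitely many sign choices, the defining infimum is attained, so there is a form $A_0\colon\ell_2^{n_1}\times\ell_\infty^{n_2}\times\cdots\times\ell_\infty^{n_{m-1}}\times\ell_2^{n_m}\to\mathbb{K}$ of the prescribed type with
\[
\left\Vert A_0\right\Vert \leq C_{2,\infty,\ldots,\infty,2,n_1,\ldots,n_m}\max\left\{n_1^{1/2},\ldots,n_m^{1/2}\right\}\prod_{k=2}^{m-1}n_k^{1/2}.
\]
I then take $f(n_1,\ldots,n_m)$ to be this right-hand side and apply Proposition \ref{LemaKSZ1}, which yields an $m$-linear form $A\colon\ell_\infty^{n_1}\times\cdots\times\ell_\infty^{n_m}\to\mathbb{K}$ of the same sign type satisfying $\left\Vert A\right\Vert\leq(n_1 n_m)^{1/2}f(n_1,\ldots,n_m)$.

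The remaining step is exponent bookkeeping: since $(n_1 n_m)^{1/2}\prod_{k=2}^{m-1}n_k^{1/2}=\prod_{k=1}^{m}n_k^{1/2}$, the bound on $\left\Vert A\right\Vert$ becomes exactly $C_{2,\infty,\ldots,\infty,2,n_1,\ldots,n_m}\max\{n_1^{1/2},\ldots,n_m^{1/2}\}\prod_{k=1}^{m}n_k^{1/2}$, which is precisely the inequality defining $C_{\infty,\ldots,\infty,n_1,\ldots,n_m}$. By the minimality of the latter constant, the claimed inequality follows. I do not expect any genuine obstacle here, as the entire analytic content is packaged in Proposition \ref{LemaKSZ1}; the hypothesis $n_1=\min\{n_1,\ldots,n_m\}$ is not used in this inequality (which holds for any ordering of the indices) and is carried only for consistency with the reductions performed later, where the chosen placement of the $\ell_2$ slots is the one exploited in the subsequent estimates.
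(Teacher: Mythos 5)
Your proposal is correct and follows essentially the same route as the paper: define $f(n_1,\ldots,n_m)$ to be the quantity $C_{2,\infty,\ldots,\infty,2,n_1,\ldots,n_m}\max\{n_1^{1/2},\ldots,n_m^{1/2}\}\prod_{k=2}^{m-1}n_k^{1/2}$ realized by an extremal $\pm1$ form, apply Proposition~\ref{LemaKSZ1} to pick up the factor $(n_1 n_m)^{1/2}$, and conclude by the minimality in the definition of $C_{\infty,\ldots,\infty,n_1,\ldots,n_m}$. Your side remarks (attainment of the infimum by finiteness of the sign choices, and the fact that the hypothesis $n_1=\min\{n_1,\ldots,n_m\}$ is not actually needed here) are accurate and do not change the argument.
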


\begin{proof}
Let $f\colon\mathbb{N}^{m}\rightarrow\lbrack0,\infty)$ be the function defined
by
\[
f\left(  n_{1},\ldots,n_{m}\right)  =C_{2,\infty,\ldots,\infty,2,n_{1}%
,\ldots,n_{m}}\max\left\{  n_{1}^{1/2},\ldots,n_{m}^{1/2}\right\}
{\textstyle\prod\limits_{k=2}^{m-1}}
n_{k}^{1/2}\text{.}%
\]
We know that, in this case, for each $\left(  n_{1},\ldots,n_{m}\right)
\in\mathbb{N}^{m}$ there is an $m$-linear form $A_{0}\colon\ell_{2}^{n_{1}%
}\times\ell_{\infty}^{n_{2}}\times\cdots\times\ell_{\infty}^{n_{m-1}}%
\times\ell_{2}^{n_{m}}\rightarrow\mathbb{K}$ of the type%
\[
A_{0}\left(  x^{\left(  1\right)  },\ldots,x^{\left(  m\right)  }\right)  =%
{\textstyle\sum\limits_{i_{1}=1}^{n_{1}}}
\cdots%
{\textstyle\sum\limits_{i_{m}=1}^{n_{m}}}
\pm x_{i_{1}}^{\left(  1\right)  }\cdots x_{i_{m}}^{\left(  m\right)  }%
\]
such that%
\[
\left\Vert A_{0}\right\Vert =C_{2,\infty,\ldots,\infty,2,n_{1},\ldots,n_{m}%
}\max\left\{  n_{1}^{1/2},\ldots,n_{m}^{1/2}\right\}
{\textstyle\prod\limits_{k=2}^{m-1}}
n_{k}^{1/2}\text{.}%
\]
From Lemma \ref{LemaKSZ1}, there is an $m$-linear form $A_{1}\colon
\ell_{\infty}^{n_{1}}\times\cdots\times\ell_{\infty}^{n_{m}}\rightarrow
\mathbb{K}$ with coefficients $\pm1$ such that%
\begin{align}
\left\Vert A_{1}\right\Vert  &  \leq\left(  n_{1}n_{m}\right)  ^{1/2}%
C_{2,\infty,\ldots,\infty,2,n_{1},\ldots,n_{m}}\max\left\{  n_{1}^{1/2}%
,\ldots,n_{m}^{1/2}\right\}
{\textstyle\prod\limits_{k=2}^{m-1}}
n_{k}^{1/2}\nonumber\\
&  =C_{2,\infty,\ldots,\infty,2,n_{1},\ldots,n_{m}}\max\left\{  n_{1}%
^{1/2},\ldots,n_{m}^{1/2}\right\}
{\textstyle\prod\limits_{k=1}^{m}}
n_{k}^{1/2} \label{KSZ4}%
\end{align}
But, by the definition of $C_{\infty,\ldots,\infty,n_{1},\ldots,n_{m}}$, it is
obvious that%
\begin{equation}
C_{\infty,\ldots,\infty,n_{1},\ldots,n_{m}}\max\left\{  n_{1}^{1/2}%
,\ldots,n_{m}^{1/2}\right\}
{\textstyle\prod\limits_{k=1}^{m}}
n_{k}^{1/2}\leq\left\Vert A_{1}\right\Vert \text{.} \label{KSZ5}%
\end{equation}
Hence, from (\ref{KSZ4}) and (\ref{KSZ5}), we obtain%
\[
C_{\infty,\ldots,\infty,n_{1},\ldots,n_{m}}\leq C_{2,\infty,\ldots
,\infty,2,n_{1},\ldots,n_{m}}%
\]
and the result follows.
\end{proof}

\begin{proposition}
\label{PropKSZ2}Let $n_{1},n_{2},\ldots,n_{m}$ be positive integers such that
$n_{1}=\min\left\{  n_{1},n_{2},\ldots,n_{m}\right\}  $, $n_{m}=\max\left\{
n_{1},n_{2},\ldots,n_{m}\right\}  $ and, for each $k=2,\ldots,m$ there exists
a Hadamard matrix $\mathbf{H}_{k}=\left[  h_{ij}^{\left(  k\right)  }\right]
_{n_{k}\times n_{k}}$. Then, the $m$-linear form%
\[
A_{0}\colon\ell_{2}^{n_{1}}\times\ell_{\infty}^{n_{2}}\times\cdots\times
\ell_{\infty}^{n_{m-1}}\times\ell_{2}^{n_{m}}\rightarrow\mathbb{C}%
\]
defined by%
\[
A_{0}\left(  x^{(1)},\dots,x^{(m)}\right)  =%
{\textstyle\sum\limits_{i_{1}=1}^{n_{1}}}
\cdots%
{\textstyle\sum\limits_{i_{m}=1}^{n_{m}}}
h_{i_{1}i_{2}}^{(2)}h_{i_{2}i_{3}}^{(3)}\cdots h_{i_{m-1}i_{m}}^{(m)}x_{i_{1}%
}^{(1)}\cdots x_{i_{m}}^{(m)}%
\]
is such that%
\[
\left\Vert A_{0}\right\Vert \leq n_{m}^{1/2}%
{\textstyle\prod\limits_{k=2}^{m-1}}
n_{k}^{1/2}=\max\left\{  n_{1}^{1/2},\ldots,n_{m}^{1/2}\right\}
{\textstyle\prod\limits_{k=2}^{m-1}}
n_{k}^{1/2}\text{.}%
\]

\end{proposition}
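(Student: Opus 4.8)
The plan is to bound $\left\Vert A_{0}\right\Vert $ directly. Fix vectors $x^{(1)}\in\ell_{2}^{n_{1}}$ and $x^{(m)}\in\ell_{2}^{n_{m}}$ with $\Vert x^{(1)}\Vert_{2},\Vert x^{(m)}\Vert_{2}\leq1$, together with $x^{(2)},\ldots,x^{(m-1)}$ in the closed unit balls of $\ell_{\infty}^{n_{2}},\ldots,\ell_{\infty}^{n_{m-1}}$, and estimate $|A_{0}(x^{(1)},\ldots,x^{(m)})|$. The strategy is an iterated Cauchy--Schwarz ``peeling'' that reads the chain of indices $i_{1}\!-\!i_{2}\!-\!\cdots\!-\!i_{m}$ from the top down and alternates two elementary moves: one use of the Hadamard row-orthogonality $\sum_{j}h_{ij}^{(k)}h_{i'j}^{(k)}=n_{k}\delta_{ii'}$ to collapse a summation index at the cost of a factor $n_{k}$, and one use of the sup-norm bound $|x_{i_{k}}^{(k)}|\leq1$ to discard a middle variable. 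This is exactly the single-matrix computation of Lemma \ref{LemaFran} iterated along the chain.

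Concretely, I would first single out the last variable and write $A_{0}=\sum_{i_{m}=1}^{n_{m}}x_{i_{m}}^{(m)}B_{i_{m}}$, where $B_{i_{m}}$ gathers the sum over $i_{1},\ldots,i_{m-1}$. Since $\Vert x^{(m)}\Vert_{2}\leq1$, Cauchy--Schwarz gives $|A_{0}|\leq(\sum_{i_{m}=1}^{n_{m}}|B_{i_{m}}|^{2})^{1/2}$. Writing $B_{i_{m}}=\sum_{i_{m-1}}h_{i_{m-1}i_{m}}^{(m)}C_{i_{m-1}}$ and expanding the square, the sum over $i_{m}$ produces $\sum_{i_{m}=1}^{n_{m}}h_{i_{m-1}i_{m}}^{(m)}h_{i'_{m-1}i_{m}}^{(m)}=n_{m}\delta_{i_{m-1}i'_{m-1}}$ by the Hadamard property of $\mathbf{H}_{m}$, collapsing the double sum to $n_{m}\sum_{i_{m-1}}|C_{i_{m-1}}|^{2}$. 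As $C_{i_{m-1}}$ carries the factor $x_{i_{m-1}}^{(m-1)}$, the bound $\Vert x^{(m-1)}\Vert_{\infty}\leq1$ lets me drop it, leaving the same type of quantity with the chain shortened by one link. Iterating this pair of moves down through $\mathbf{H}_{m-1},\ldots,\mathbf{H}_{3}$ and finally $\mathbf{H}_{2}$ (whose orthogonality also absorbs the residual $\ell_{2}$-sum $\sum_{i_{1}}|x_{i_{1}}^{(1)}|^{2}\leq1$) accumulates precisely the factors $n_{m},n_{m-1},\ldots,n_{2}$ under the square root. Hence $|A_{0}|\leq(n_{2}n_{3}\cdots n_{m})^{1/2}=n_{m}^{1/2}\prod_{k=2}^{m-1}n_{k}^{1/2}$, which is the claimed estimate once we recall $n_{m}=\max\{n_{1},\ldots,n_{m}\}$.

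The routine but delicate part is the bookkeeping of the nested summations: one must apply each Hadamard orthogonality to the correct pair of ``row'' indices and insert each sup-norm bound exactly at the step where the corresponding middle variable has just been isolated. The only genuine structural requirement is that at every stage the two indices being compared are legitimate rows of the matrix in play, i.e. $i_{k-1},i'_{k-1}\in\{1,\ldots,n_{k}\}$, so that $\sum_{j}h_{i_{k-1}j}^{(k)}h_{i'_{k-1}j}^{(k)}=n_{k}\delta_{i_{k-1}i'_{k-1}}$ is available; this is guaranteed by the size hypotheses on the $n_{k}$ and is automatic in the intended application $n_{1}=\cdots=n_{m}=n$. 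I expect this index-range verification, rather than any deep inequality, to be the main point to get right, since it is exactly what makes the telescoping product of the factors $n_{k}$ go through.
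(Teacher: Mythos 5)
Your argument is correct and is essentially the paper's own proof: the same iterated Cauchy--Schwarz peeling from $i_{m}$ down to $i_{1}$, alternating the Hadamard row-orthogonality $\langle u_{i}^{(k)},u_{j}^{(k)}\rangle=n_{k}\delta_{ij}$ with the sup-norm bound on the middle variables and closing with $\sum_{i_{1}}|x_{i_{1}}^{(1)}|^{2}\leq1$. The only cosmetic difference is that the paper first zero-pads the matrices $\mathbf{H}_{k}$ and the vectors to size $n_{m}$ so that all summations share a common range, which is precisely its way of handling the index-range bookkeeping you rightly single out as the delicate point.
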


\begin{proof}
For each $k=2,\ldots,m$, let $u_{i}^{\left(  k\right)  }$, $i=1,\ldots,n_{k}$,
be the $i$-th rows of $\mathbf{H}_{k}$. Hence
\begin{equation}
\left\langle u_{i}^{\left(  k\right)  },u_{j}^{\left(  k\right)
}\right\rangle =n_{k}\delta_{ij}\text{.} \label{KSZ6}%
\end{equation}
Let us consider the square matrices of order $n_{m}$ defined by%
\[
\left[  \mathfrak{h}_{ij}^{\left(  k\right)  }\right]  _{n_{m}\times n_{m}%
}:=\left[
\begin{array}
[c]{cc}%
\mathbf{H}_{k} & \mathbf{0}_{n_{k}\times\left(  n_{m}-n_{k}\right)  }\\
\mathbf{0}_{\left(  n_{m}-n_{k}\right)  \times n_{k}} & \mathbf{0}_{\left(
n_{m}-n_{k}\right)  \times\left(  n_{m}-n_{k}\right)  }%
\end{array}
\right]
\]
for each $k=2,\ldots,m$ and, given $x^{(k)}\in\ell_{p_{k}}^{n_{k}}$, where
$p_{k}=2$ if $k\in\left\{  1,m\right\}  $ and $p_{k}=\infty$ if $k\in\left\{
2,\ldots,m-1\right\}  $, let us denote
\[
y^{(k)}=\left(  x_{1}^{(k)},\ldots,x_{n_{k}}^{(k)},0,\ldots,0\right)  \in
\ell_{p_{k}}^{n_{m}}\text{.}%
\]
Let us define%
\[
A_{0}\colon\ell_{2}^{n_{1}}\times\ell_{\infty}^{n_{2}}\cdots\times\ell
_{\infty}^{n_{m-1}}\times\ell_{2}^{n_{m}}\rightarrow\mathbb{K}%
\]
by
\[
A_{0}\left(  x^{(1)},\dots,x^{(m)}\right)  =%
{\textstyle\sum\limits_{i_{1}=1}^{n_{1}}}
\cdots%
{\textstyle\sum\limits_{i_{m}=1}^{n_{m}}}
\left(
{\textstyle\prod\limits_{k=2}^{m}}
h_{i_{k-1}i_{k}}^{(k)}\right)  \left(
{\textstyle\prod\limits_{k=1}^{m}}
x_{i_{k}}^{(k)}\right)  ={%
{\textstyle\sum\limits_{i_{1},\dots,i_{m}=1}^{n_{m}}}
}\left(
{\textstyle\prod\limits_{k=2}^{m}}
\mathfrak{h}_{i_{k-1}i_{k}}^{(k)}\right)  \left(
{\textstyle\prod\limits_{k=1}^{m}}
y_{i_{k}}^{(k)}\right)
\]
and notice that
\[%
{\textstyle\prod\limits_{k=2}^{m}}
h_{i_{k-1}i_{k}}^{(k)}\in\left\{  -1,1\right\}
\]
whenever $i_{j}\in\{1,\ldots,n_{j}\}$ for all $j=1,\ldots,m$.

Assuming that, $x^{\left(  k\right)  }$ rests in the closed unit ball
$B_{\ell_{p_{k}}^{n_{k}}}$ for each each $k=1,\ldots,m$, we have $y^{\left(
k\right)  }$ in the closed unit ball $B_{\ell_{p_{k}}^{n_{m}}}$ for each
$k=1,\ldots,m$ and, by the H\"{o}lder inequality,%
\begin{align*}
\left\vert A_{0}\left(  x^{(1)},\dots,x^{(m)}\right)  \right\vert  &
=\left\vert
{\textstyle\sum\limits_{i_{1},\dots,i_{m}=1}^{n_{m}}}
\left(
{\textstyle\prod\limits_{k=2}^{m}}
\mathfrak{h}_{i_{k-1}i_{k}}^{(k)}\right)  \left(
{\textstyle\prod\limits_{k=1}^{m}}
y_{i_{k}}^{(k)}\right)  \right\vert \\
&  \leq%
{\textstyle\sum\limits_{i_{m}=1}^{n_{m}}}
\left\vert
{\textstyle\sum\limits_{i_{1},\dots,i_{m-1}=1}^{n_{m}}}
\left(
{\textstyle\prod\limits_{k=2}^{m}}
\mathfrak{h}_{i_{k-1}i_{k}}^{(k)}\right)  \left(
{\textstyle\prod\limits_{k=1}^{m-1}}
y_{i_{k}}^{(k)}\right)  \right\vert \left\vert y_{i_{m}}^{\left(  m\right)
}\right\vert \\
&  \leq\left(
{\textstyle\sum\limits_{i_{m}=1}^{n_{m}}}
\left\vert
{\textstyle\sum\limits_{i_{1},\dots,i_{m-1}=1}^{n_{m}}}
\left(
{\textstyle\prod\limits_{k=2}^{m}}
\mathfrak{h}_{i_{k-1}i_{k}}^{(k)}\right)  \left(
{\textstyle\prod\limits_{k=1}^{m-1}}
y_{i_{k}}^{(k)}\right)  \right\vert ^{2}\right)  ^{1/2}\left\Vert y^{\left(
m\right)  }\right\Vert _{2}\\
&  \leq\left(
{\textstyle\sum\limits_{i_{m}=1}^{n_{m}}}
\left\vert
{\textstyle\sum\limits_{i_{1},\dots,i_{m-1}=1}^{n_{m}}}
\left(
{\textstyle\prod\limits_{k=2}^{m}}
\mathfrak{h}_{i_{k-1}i_{k}}^{(k)}\right)  \left(
{\textstyle\prod\limits_{k=1}^{m-1}}
y_{i_{k}}^{(k)}\right)  \right\vert ^{2}\right)  ^{1/2}\\
&  =\left(
{\textstyle\sum\limits_{i_{m}=1}^{n_{m}}}
{\textstyle\sum\limits_{\substack{i_{1},\dots,i_{m-1}=1\\j_{1},\dots
,j_{m-1}=1}}^{n_{m}}}
\left(
{\textstyle\prod\limits_{k=2}^{m-1}}
\mathfrak{h}_{i_{k-1}i_{k}}^{(k)}\mathfrak{h}_{j_{k-1}j_{k}}^{(k)}\right)
\mathfrak{h}_{i_{m-1}i_{m}}^{(m)}\mathfrak{h}_{j_{m-1}i_{m}}^{(m)}\left(
{\textstyle\prod\limits_{k=1}^{m-1}}
y_{i_{k}}^{(k)}\overline{y_{j_{k}}^{(k)}}\right)  \right)  ^{1/2}\text{.}%
\end{align*}
Hence,
\begin{align*}
\left\vert A_{0}\left(  x^{(1)},\dots,x^{(m)}\right)  \right\vert  &
\leq\left(
{\textstyle\sum\limits_{\substack{i_{1},\dots,i_{m-1}=1\\j_{1},\dots
,j_{m-1}=1}}^{n_{m}}}
\left(
{\textstyle\prod\limits_{k=2}^{m-1}}
\mathfrak{h}_{i_{k-1}i_{k}}^{(k)}\mathfrak{h}_{j_{k-1}j_{k}}^{\left(
k\right)  }\right)  \left(
{\textstyle\prod\limits_{k=1}^{m-1}}
y_{i_{k}}^{(k)}\overline{y_{j_{k}}^{(k)}}\right)
{\textstyle\sum\limits_{i_{m}=1}^{n_{m}}}
\mathfrak{h}_{i_{m-1}i_{m}}^{(m)}\mathfrak{h}_{j_{m-1}i_{m}}^{(m)}\right)
^{1/2}\\
&  =\left(
{\textstyle\sum\limits_{\substack{i_{1},\dots,i_{m-1}=1\\j_{1},\dots
,j_{m-1}=1}}^{n_{m}}}
\left(
{\textstyle\prod\limits_{k=2}^{m-1}}
\mathfrak{h}_{i_{k-1}i_{k}}^{(k)}\mathfrak{h}_{j_{k-1}j_{k}}^{\left(
k\right)  }\right)  \left(
{\textstyle\prod\limits_{k=1}^{m-1}}
y_{i_{k}}^{(k)}\overline{y_{j_{k}}^{(k)}}\right)  \left\langle u_{i_{m-1}%
}^{\left(  m\right)  },u_{j_{m-1}}^{\left(  m\right)  }\right\rangle \right)
^{1/2}%
\end{align*}
and, from (\ref{KSZ6}), we have
\begin{align*}
&  \left\vert A_{0}\left(  x^{(1)},\dots,x^{(m)}\right)  \right\vert \\
&  \leq\left(
{\textstyle\sum\limits_{i_{m-1}=1}^{n_{m}}}
{\textstyle\sum\limits_{\substack{i_{1},\dots,i_{m-2}=1\\j_{1},\dots
,j_{m-2}=1}}^{n_{m}}}
\left(
{\textstyle\prod\limits_{k=2}^{m-2}}
\mathfrak{h}_{i_{k-1}i_{k}}^{(k)}\mathfrak{h}_{j_{k-1}j_{k}}^{\left(
k\right)  }\right)  \left(
{\textstyle\prod\limits_{k=1}^{m-2}}
y_{i_{k}}^{(k)}\overline{y_{j_{k}}^{(k)}}\right)  \mathfrak{h}_{i_{m-2}%
i_{m-1}}^{(m-1)}\mathfrak{h}_{j_{m-2}i_{m-1}}^{(m-1)}\left\vert y_{i_{m-1}%
}^{(m-1)}\right\vert ^{2}n_{m}\right)  ^{1/2}\\
&  \leq n_{m}^{1/2}\left(
{\textstyle\sum\limits_{\substack{i_{1},\dots,i_{m-2}=1\\j_{1},\dots
,j_{m-2}=1}}^{n_{m}}}
\left(
{\textstyle\prod\limits_{k=2}^{m-2}}
\mathfrak{h}_{i_{k-1}i_{k}}^{(k)}\mathfrak{h}_{j_{k-1}j_{k}}^{\left(
k\right)  }\right)  \left(
{\textstyle\prod\limits_{k=1}^{m-2}}
y_{i_{k}}^{(k)}\overline{y_{j_{k}}^{(k)}}\right)
{\textstyle\sum\limits_{i_{m-1}=1}^{n_{m}}}
\mathfrak{h}_{i_{m-2}i_{m-1}}^{(m-1)}\mathfrak{h}_{j_{m-2}i_{m-1}}%
^{(m-1)}\right)  ^{1/2}\\
&  =n_{m}^{1/2}\left(
{\textstyle\sum\limits_{\substack{i_{1},\dots,i_{m-2}=1\\j_{1},\dots
,j_{m-2}=1}}^{n_{m}}}
\left(
{\textstyle\prod\limits_{k=2}^{m-2}}
\mathfrak{h}_{i_{k-1}i_{k}}^{(k)}\mathfrak{h}_{j_{k-1}j_{k}}^{\left(
k\right)  }\right)  \left(
{\textstyle\prod\limits_{k=1}^{m-2}}
y_{i_{k}}^{(k)}\overline{y_{j_{k}}^{(k)}}\right)  \left\langle u_{i_{m-2}%
}^{\left(  m-1\right)  },u_{j_{m-2}}^{\left(  m-1\right)  }\right\rangle
\right)  ^{1/2}\text{.}%
\end{align*}
Since%
\begin{align*}
&  \left(
{\textstyle\sum\limits_{\substack{i_{1},\dots,i_{m-2}=1\\j_{1},\dots
,j_{m-2}=1}}^{n_{m}}}
\left(
{\textstyle\prod\limits_{k=2}^{m-2}}
\mathfrak{h}_{i_{k-1}i_{k}}^{(k)}\mathfrak{h}_{j_{k-1}j_{k}}^{\left(
k\right)  }\right)  \left(
{\textstyle\prod\limits_{k=1}^{m-2}}
y_{i_{k}}^{(k)}\overline{y_{j_{k}}^{(k)}}\right)  \left\langle u_{i_{m-2}%
}^{\left(  m-1\right)  },u_{j_{m-2}}^{\left(  m-1\right)  }\right\rangle
\right)  ^{1/2}\\
&  =n_{m-1}^{1/2}\left(
{\textstyle\sum\limits_{i_{m-2}=1}^{n_{m}}}
{\textstyle\sum\limits_{\substack{i_{1},\dots,i_{m-3}=1\\j_{1},\dots
,j_{m-3}=1}}^{n_{m}}}
\left(
{\textstyle\prod\limits_{k=2}^{m-3}}
\mathfrak{h}_{i_{k-1}i_{k}}^{(k)}\mathfrak{h}_{j_{k-1}j_{k}}^{\left(
k\right)  }\right)  \left(
{\textstyle\prod\limits_{k=1}^{m-3}}
y_{i_{k}}^{(k)}\overline{y_{j_{k}}^{(k)}}\right)  \mathfrak{h}_{i_{m-3}%
i_{m-2}}^{(m-2)}\mathfrak{h}_{j_{m-3}i_{m-2}}^{(m-2)}\left\vert y_{i_{m-2}%
}^{(m-2)}\right\vert ^{2}\right)  ^{1/2}\\
&  \leq n_{m-1}^{1/2}\left(
{\textstyle\sum\limits_{i_{m-2}=1}^{n_{m}}}
{\textstyle\sum\limits_{\substack{i_{1},\dots,i_{m-3}=1\\j_{1},\dots
,j_{m-3}=1}}^{n_{m}}}
\left(
{\textstyle\prod\limits_{k=2}^{m-3}}
\mathfrak{h}_{i_{k-1}i_{k}}^{(k)}h_{j_{k-1}j_{k}}^{\left(  k\right)  }\right)
\left(
{\textstyle\prod\limits_{k=1}^{m-3}}
y_{i_{k}}^{(k)}\overline{y_{j_{k}}^{(k)}}\right)  \mathfrak{h}_{i_{m-3}%
i_{m-2}}^{(m-2)}\mathfrak{h}_{j_{m-3}i_{m-2}}^{(m-2)}\right)  ^{1/2}\\
&  =n_{m-1}^{1/2}\left(
{\textstyle\sum\limits_{\substack{i_{1},\dots,i_{m-3}=1\\j_{1},\dots
,j_{m-3}=1}}^{n_{m}}}
\left(
{\textstyle\prod\limits_{k=2}^{m-3}}
\mathfrak{h}_{i_{k-1}i_{k}}^{(k)}\mathfrak{h}_{j_{k-1}j_{k}}^{\left(
k\right)  }\right)  \left(
{\textstyle\prod\limits_{k=1}^{m-3}}
y_{i_{k}}^{(k)}\overline{y_{j_{k}}^{(k)}}\right)
{\textstyle\sum\limits_{i_{m-2}=1}^{n_{m}}}
\mathfrak{h}_{i_{m-3}i_{m-2}}^{(m-2)}\mathfrak{h}_{j_{m-3}i_{m-2}}%
^{(m-2)}\right)  ^{1/2}\\
&  =n_{m-1}^{1/2}\left(
{\textstyle\sum\limits_{\substack{i_{1},\dots,i_{m-3}=1\\j_{1},\dots
,j_{m-3}=1}}^{N}}
\left(
{\textstyle\prod\limits_{k=2}^{m-3}}
\mathfrak{h}_{i_{k-1}i_{k}}^{(k)}\mathfrak{h}_{j_{k-1}j_{k}}^{\left(
k\right)  }\right)  \left(
{\textstyle\prod\limits_{k=1}^{m-3}}
y_{i_{k}}^{(k)}\overline{y_{j_{k}}^{(k)}}\right)  \left\langle u_{i_{m-2}%
}^{\left(  m-2\right)  },u_{j_{m-2}}^{\left(  m-2\right)  }\right\rangle
\right)  ^{1/2}\text{,}%
\end{align*}
we have%
\begin{align*}
&  \left\vert A_{0}\left(  x^{(1)},\dots,x^{(m)}\right)  \right\vert \\
&  \leq n_{m}^{1/2}n_{m-1}^{1/2}\left(
{\textstyle\sum\limits_{\substack{i_{1},\dots,i_{m-3}=1\\j_{1},\dots
,j_{m-3}=1}}^{n_{m}}}
\left(
{\textstyle\prod\limits_{k=2}^{m-3}}
\mathfrak{h}_{i_{k-1}i_{k}}^{(k)}\mathfrak{h}_{j_{k-1}j_{k}}^{\left(
k\right)  }\right)  \left(
{\textstyle\prod\limits_{k=1}^{m-3}}
y_{i_{k}}^{(k)}\overline{y_{j_{k}}^{(k)}}\right)  \left\langle u_{i_{m-2}%
}^{\left(  m-2\right)  },u_{j_{m-2}}^{\left(  m-2\right)  }\right\rangle
\right)  ^{1/2}%
\end{align*}
and, repeating this procedure, we finally infer that%
\begin{align*}
\left\vert A_{0}\left(  x^{(1)},\dots,x^{(m)}\right)  \right\vert  &  \leq
n_{m}^{1/2}n_{m-1}^{1/2}\cdots n_{2}^{1/2}\left(
{\textstyle\sum\limits_{i_{1}=1}^{n_{m}}}
\left\vert y_{i_{1}}^{(1)}\right\vert ^{2}\right)  ^{1/2}\\
&  \leq n_{m}^{1/2}%
{\textstyle\prod\limits_{k=2}^{m-1}}
n_{k}^{1/2}%
\end{align*}
and the proof is done.
\end{proof}

\begin{corollary}
\label{CorKSZ2}Let $n_{1},n_{2},\ldots,n_{m}\in\mathbb{N}$ be such that
$n_{1}=\min\left\{  n_{1},n_{2},\ldots,n_{m}\right\}  $ and $n_{m}%
=\max\left\{  n_{1},n_{2},\ldots,n_{m}\right\}  $. Let $r_{1}=n_{1}$ and, for
each $k=2,\ldots,m$ let $r_{k}\geq n_{k}$ be a positive integer for which
there exists a Hadamard matrix of order $r_{k}$, with $r_{m}=\max\left\{
r_{1},\ldots,r_{m}\right\}  $. Then,%
\[
C_{2,\infty,\ldots,\infty,2,n_{1},\ldots,n_{m}}\leq\left(  r_{m}/n_{m}\right)
^{1/2}%
{\textstyle\prod\limits_{k=2}^{m-1}}
\left(  r_{k}/n_{k}\right)  ^{1/2}\text{.}%
\]

\end{corollary}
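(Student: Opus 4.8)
The plan is to mimic the proof of Lemma \ref{LemaFran}, replacing the single Hadamard matrix used there by the family $\mathbf{H}_2,\ldots,\mathbf{H}_m$ of orders $r_2,\ldots,r_m$ supplied by the hypothesis, and then to correct the dimensions by a zero-padding (restriction) argument so as to pass from the ``Hadamard-sized'' dimensions $r_k$ back to the prescribed $n_k$.

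First I would check that the tuple $(r_1,\ldots,r_m)$ satisfies the hypotheses of Proposition \ref{PropKSZ2}. Since $r_1=n_1=\min\{n_1,\ldots,n_m\}$ and $r_k\geq n_k\geq n_1=r_1$ for every $k$, we have $r_1=\min\{r_1,\ldots,r_m\}$; moreover $r_m=\max\{r_1,\ldots,r_m\}$ by assumption, and a Hadamard matrix of order $r_k$ exists for each $k=2,\ldots,m$. Thus Proposition \ref{PropKSZ2}, applied to the dimensions $r_1,\ldots,r_m$, yields an $m$-linear form
\[
B_0\colon\ell_2^{r_1}\times\ell_\infty^{r_2}\times\cdots\times\ell_\infty^{r_{m-1}}\times\ell_2^{r_m}\rightarrow\mathbb{K}
\]
with $\pm1$ coefficients (built from the $\mathbf{H}_k$) such that $\Vert B_0\Vert\leq r_m^{1/2}\prod_{k=2}^{m-1}r_k^{1/2}$.

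Next I would pass back to the dimensions $n_k$ by restriction. I define $A_0\colon\ell_2^{n_1}\times\ell_\infty^{n_2}\times\cdots\times\ell_\infty^{n_{m-1}}\times\ell_2^{n_m}\rightarrow\mathbb{K}$ by $A_0(x^{(1)},\ldots,x^{(m)})=B_0(\tilde x^{(1)},\ldots,\tilde x^{(m)})$, where $\tilde x^{(k)}=(x_1^{(k)},\ldots,x_{n_k}^{(k)},0,\ldots,0)\in\mathbb{K}^{r_k}$; this makes sense precisely because $r_k\geq n_k$ (with $r_1=n_1$, so the first slot is not even padded). Its coefficients form a sub-family of those of $B_0$, hence remain $\pm1$, so $A_0$ is of the required type, and since zero-padding embeds the unit ball of $\ell_{p_k}^{n_k}$ isometrically into that of $\ell_{p_k}^{r_k}$ (in both the $\ell_2$ and the $\ell_\infty$ slots), we get $\Vert A_0\Vert\leq\Vert B_0\Vert$. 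Finally, using $n_m=\max\{n_1,\ldots,n_m\}$, i.e.\ $\max\{n_1^{1/2},\ldots,n_m^{1/2}\}=n_m^{1/2}$, I rewrite the estimate as
\[
\Vert A_0\Vert\leq r_m^{1/2}\prod_{k=2}^{m-1}r_k^{1/2}=\left(\frac{r_m}{n_m}\right)^{1/2}\prod_{k=2}^{m-1}\left(\frac{r_k}{n_k}\right)^{1/2}\max\{n_1^{1/2},\ldots,n_m^{1/2}\}\prod_{k=2}^{m-1}n_k^{1/2},
\]
so that by the very definition of $C_{2,\infty,\ldots,\infty,2,n_1,\ldots,n_m}$ as the smallest admissible constant, the claimed inequality follows.

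The argument is essentially routine once Proposition \ref{PropKSZ2} is available; the only steps requiring care are the verification that $(r_1,\ldots,r_m)$ actually meets the min/max hypotheses of that proposition (which is exactly where the assumptions $r_1=n_1$ and $r_k\geq n_k$ enter), and the bookkeeping of the normalizing factor $\max\{n_1^{1/2},\ldots,n_m^{1/2}\}\prod_{k=2}^{m-1}n_k^{1/2}$, so that the ratio of the ``$r$-normalization'' to the ``$n$-normalization'' is precisely $(r_m/n_m)^{1/2}\prod_{k=2}^{m-1}(r_k/n_k)^{1/2}$. I do not expect any genuine obstacle beyond these bookkeeping points.
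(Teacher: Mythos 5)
Your proposal is correct and follows essentially the same route as the paper: apply Proposition \ref{PropKSZ2} to the dimensions $r_{1},\ldots,r_{m}$ and then restrict (zero-pad) back to $\ell_{2}^{n_{1}}\times\ell_{\infty}^{n_{2}}\times\cdots\times\ell_{\infty}^{n_{m-1}}\times\ell_{2}^{n_{m}}$, using $\left\Vert A_{0}\right\Vert \leq\left\Vert A\right\Vert$ and $n_{m}^{1/2}=\max\{n_{1}^{1/2},\ldots,n_{m}^{1/2}\}$ to extract the constant. Your explicit verification that $(r_{1},\ldots,r_{m})$ meets the min/max hypotheses of Proposition \ref{PropKSZ2} is a detail the paper leaves implicit, but the argument is the same.
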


\begin{proof}
It follows from Proposition \ref{PropKSZ2} that there is an $m$-linear form
$A\colon\ell_{2}^{r_{1}}\times\ell_{\infty}^{r_{2}}\times\cdots\times
\ell_{\infty}^{r_{m-1}}\times\ell_{2}^{r_{m}}\rightarrow\mathbb{K}$ of the
type%
\[
A(z^{(1)},...,z^{(m)})=%
{\textstyle\sum\limits_{i_{1}=1}^{r_{1}}}
\cdots%
{\textstyle\sum\limits_{i_{m}=1}^{r_{m}}}
\pm z_{i_{1}}^{(1)}\cdots z_{i_{m}}^{(m)}%
\]
such that%
\[
\left\Vert A\right\Vert \leq r_{m}^{1/2}%
{\textstyle\prod\limits_{k=2}^{m-1}}
r_{k}^{1/2}\text{.}%
\]
If $A_{0}$ is the restriction of $A$ to $\ell_{2}^{n_{1}}\times\ell_{\infty
}^{n_{2}}\times\cdots\times\ell_{\infty}^{n_{m-1}}\times\ell_{2}^{n_{m}}$ then
$\left\Vert A_{0}\right\Vert \leq\left\Vert A\right\Vert $ and the result follows.
\end{proof}

\begin{theorem}
For each $m\in\mathbb{N}$, there holds%
\[
C_{\infty,\ldots,\infty}\leq\left(  8/5\right)  ^{\frac{m-1}{2}}\text{.}%
\]

\end{theorem}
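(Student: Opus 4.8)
The plan is to reduce this $m$-linear diagonal bound to the single scalar fact about Hadamard orders that is already buried in the proof of Theorem \ref{bbnn}, and then to run it through Proposition \ref{PropKSZ1} and Corollary \ref{CorKSZ2}. Since every factor domain is a copy of $\ell_{\infty}$, the constant $C_{\infty,\ldots,\infty,n_{1},\ldots,n_{m}}$ is symmetric in $(n_{1},\ldots,n_{m})$, so I may assume $n_{1}\leq\cdots\leq n_{m}$; then $n_{1}=\min\{n_{1},\ldots,n_{m}\}$ and $n_{m}=\max\{n_{1},\ldots,n_{m}\}$, which are exactly the hypotheses of Proposition \ref{PropKSZ1} and Corollary \ref{CorKSZ2}. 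Proposition \ref{PropKSZ1} then gives
\[
C_{\infty,\ldots,\infty,n_{1},\ldots,n_{m}}\leq C_{2,\infty,\ldots,\infty,2,n_{1},\ldots,n_{m}}.
\]

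Next I would invoke the arithmetic fact extracted in the proof of Theorem \ref{bbnn} through Lemma \ref{LemaFran}: for every positive integer $n$ there is an order $r$ of a Hadamard matrix with $n\leq r\leq(8/5)\,n$. I fix such an $r_{m}$ for $n_{m}$ and set $r_{1}=n_{1}$, as Corollary \ref{CorKSZ2} demands. For each $k\in\{2,\ldots,m-1\}$ I choose $r_{k}$ by a small case split: if $(8/5)\,n_{k}\leq r_{m}$ I pick a Hadamard order $r_{k}\in[n_{k},(8/5)\,n_{k}]$ (which then satisfies $r_{k}\leq r_{m}$), while if $(8/5)\,n_{k}>r_{m}$ I simply set $r_{k}=r_{m}$, which is a Hadamard order with $r_{k}\geq n_{m}\geq n_{k}$. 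In both cases $n_{k}\leq r_{k}\leq r_{m}$ and $r_{k}/n_{k}\leq 8/5$, the second case using that $(8/5)\,n_{k}>r_{m}$ forces $r_{m}/n_{k}<8/5$.

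These choices make $r_{m}=\max\{r_{1},\ldots,r_{m}\}$, so Corollary \ref{CorKSZ2} applies and delivers
\[
C_{2,\infty,\ldots,\infty,2,n_{1},\ldots,n_{m}}\leq\left(\frac{r_{m}}{n_{m}}\right)^{1/2}\prod_{k=2}^{m-1}\left(\frac{r_{k}}{n_{k}}\right)^{1/2}\leq\left((8/5)^{1/2}\right)^{m-1}=(8/5)^{(m-1)/2},
\]
because the middle expression is a product of exactly $m-1$ factors each at most $(8/5)^{1/2}$. Taking the supremum over all $n_{1},\ldots,n_{m}$ yields $C_{\infty,\ldots,\infty}\leq(8/5)^{(m-1)/2}$.

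I expect the only real obstacle to be the selection of the auxiliary orders $r_{k}$: Corollary \ref{CorKSZ2} insists that $r_{m}$ be the largest of the $r_{k}$, whereas the naive choice $r_{k}\in[n_{k},(8/5)n_{k}]$ may overshoot $r_{m}$ when $n_{k}$ is close to $n_{m}$. The case split above is what repairs this, and the point is that precisely in the regime where the interval would exceed $r_{m}$ the collapsed choice $r_{k}=r_{m}$ still keeps the ratio below $8/5$. For the diagonal case $n_{1}=\cdots=n_{m}=n$ highlighted just before the theorem no case split is needed: one order $r\in[n,(8/5)n]$ works for every index at once and the estimate degenerates to $(r/n)^{(m-1)/2}\leq(8/5)^{(m-1)/2}$.
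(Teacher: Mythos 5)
Your proposal is correct and follows essentially the same route as the paper: Proposition \ref{PropKSZ1} followed by Corollary \ref{CorKSZ2}, combined with the Hadamard-order density fact (every positive integer $n$ admits a Hadamard order $r$ with $n\leq r\leq(8/5)n$, worst case $n=5$) that the paper establishes by the same case analysis used in the proof of Theorem \ref{bbnn}. The only difference is scope and packaging: the paper's proof treats only the diagonal case $n_{1}=\cdots=n_{m}=n$, which is all the theorem asserts and where, as you note, a single order $r$ serves every coordinate, whereas your case split on the auxiliary orders $r_{k}$ extends the identical argument to arbitrary $n_{1},\ldots,n_{m}$.
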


\begin{proof}
Notice that, from Proposition \ref{PropKSZ1} and Corollary \ref{CorKSZ2},%
\[
\left\{
\begin{array}
[c]{l}%
C_{\infty,\ldots,\infty,1,\ldots,1}=1\text{,}\\
\vspace{-0.3cm}\\
C_{\infty,\ldots,\infty,2,\ldots,2}\leq C_{2,\infty,\ldots,\infty
,2,2,\ldots,2}\leq1\text{,}\\
\vspace{-0.3cm}\\
C_{\infty,\ldots,\infty,3,\ldots,3}\leq C_{2,\infty,\ldots,\infty
,2,3,\ldots,3}\leq\left(  4/3\right)  ^{\frac{m-1}{2}}\text{,}\\
\vspace{-0.3cm}\\
C_{\infty,\ldots,\infty,4,\ldots,4}\leq C_{2,\infty,\ldots,\infty
,2,4,\ldots,4}\leq1\text{.}\\
\vspace{-0.3cm}\\
C_{\infty,\ldots,\infty,5,\ldots,5}\leq C_{2,\infty,\ldots,\infty
,2,5,\ldots,5}\leq\left(  8/5\right)  ^{\frac{m-1}{2}}\\
\vspace{-0.3cm}\\
C_{\infty,\ldots,\infty,6\ldots,6}\leq C_{2,\infty,\ldots,\infty,2,6,\ldots
,6}\leq\left(  8/6\right)  ^{\frac{m-1}{2}}\\
\vspace{-0.3cm}\\
C_{\infty,\ldots,\infty,7,\ldots,7}\leq C_{2,\infty,\ldots,\infty
,2,7,\ldots,7}\leq\left(  8/7\right)  ^{\frac{m-1}{2}}\\
\vspace{-0.3cm}\\
C_{\infty,\ldots,\infty,8,\ldots,8}\leq C_{2,\infty,\ldots,\infty
,2,8,\ldots,8}\leq1
\end{array}
\right.
\]
For $9\leq n\leq64$, let $k$ be the unique positive integer such that
\begin{equation}
4k<n\leq4\left(  k+1\right)  \text{.} \label{KSZ7}%
\end{equation}
From Proposition \ref{PropKSZ1} and Corollary \ref{CorKSZ2}, as well as
(\ref{KSZ7}),
\begin{align*}
C_{\infty,\ldots,\infty,n\ldots,n}  &  \leq C_{2,\infty,\ldots,\infty
,2,n,\ldots,n}\leq\left(  \frac{4\left(  k+1\right)  }{n}\right)  ^{\frac
{m-1}{2}}\\
&  =\left(  \frac{4\left(  k+1\right)  }{4k}\right)  ^{\frac{m-1}{2}}=\left(
1+\frac{1}{k}\right)  ^{\frac{m-1}{2}}\leq\left(  3/2\right)  ^{\frac{m-1}{2}}%
\end{align*}

If $n\geq65$, according with the concepts and notations established in Theorem
\ref{bbnn}, there exists $\left(  t,k\right)  \in\mathbb{N}\times\left\{
8,\ldots,63\right\}  $ such that $n\in A_{t,k}$, that is,%
\begin{equation}
8^{t}k<n\leq8^{t}\left(  k+1\right)  \text{,} \label{KSZ8}%
\end{equation}
for some $t\in\mathbb{N}$ and some $8\leq k\leq63$. From Proposition
\ref{PropKSZ1} and Corollary \ref{CorKSZ2}, as well as (\ref{KSZ8}),
\begin{align*}
C_{\infty,\ldots,\infty,n\ldots,n}  &  \leq C_{2,\infty,\ldots,\infty
,2,n,\ldots,n}\leq\left(  \frac{8^{t}\left(  k+1\right)  }{n}\right)
^{\frac{m-1}{2}}\\
&  =\left(  \frac{8^{t}\left(  k+1\right)  }{8^{t}k}\right)  ^{\frac{m-1}{2}%
}=\left(  1+\frac{1}{k}\right)  ^{\frac{m-1}{2}}\leq\left(  9/8\right)
^{\frac{m-1}{2}}\text{.}%
\end{align*}
This ends the proof.
\end{proof}

\section{The Gale--Berlekamp switching game\label{Sec5}}

The Gale-Berlekamp switching game or unbalancing lights problem (see
\cite[Section 2.5]{alon} and \cite[Chapter 6]{spencer}) consists of an
$n\times n$ square matrix of light bulbs set up at an initial configuration
$\Theta_{n}$. The board has $n$ row and $n$ column switches, which invert the
on-off state of each bulb (on to off and off to on) in the corresponding row
or column. Let $i(\Theta_{n})$ denote the smallest final number of on-lights
achievable by row and column switches starting from $\Theta_{n}$. The goal is
to find $i(\Theta_{n}^{(0)})$ when $\Theta_{n}^{(0)}$ is (one of) the worst
initial pattern, i.e., $i(\Theta_{n}^{(0)})\geq i(\Theta_{n})$ for all
$\Theta_{n}$. Thus we want to estimate
\[
R_{n}:=\max\{i(\Theta_{n}):\Theta_{n}\text{ is an initial configuration of
}n\times n\text{ lights}\}\text{.}%
\]
Sometimes the problem is posed as to find the maximum of the difference
between the state of the light bulbs (starting from (one of) the worst initial
patterns, as before), which we shall henceforth denote by $G_{n}$. It is
simple to check that
\begin{equation}
R_{n}=\frac{1}{2}\left(  n^{2}-G_{n}\right)  \text{.} \label{Ber}%
\end{equation}
The determination of the exact value of $R_{n}$ seems to be conceivable only
for small values of $n$ due to involving combinatorial arguments. The exact
value of $R_{n}$ for $n$ up to $12$ was obtained by Carlson and Stolarski
(\cite{car}; see also \cite{TBJS,slo}). For bigger values of $n$, optimal
constructive approaches seem impracticable and no algorithm to construct such
a \textquotedblleft bad\textquotedblright\ configuration $\Theta_{n}$ seems to
be known. Thus, for bigger values of $n$, probabilistic (non-deterministic)
methods are used to provide estimates for $R_{n}$ and $G_{n}$. The natural
approach to modeling Berlekamp's switching game is by associating $+1$ to the
on-lights and $-1$ to the off-lights from the array of lights $\left(
a_{ij}\right)  _{i,j=1}^{n}$ and observing that
\[
G_{n}=\min\left\{  \max_{\left(  x_{i}\right)  _{i=1}^{n},\left(
y_{j}\right)  _{j=1}^{n}\in\{-1,1\}^{n}}\left\vert
{\textstyle\sum\limits_{i,j=1}^{n}}
a_{ij}x_{i}y_{j}\right\vert :a_{ij}=-1\text{ or }+1\right\}  \text{,}%
\]
where $x_{i}$ and $y_{j}$ denote the switches of row $i$ and of column $j$, respectively.%

\begin{multicols}{2}%
%

\begin{figure}[H]
\centering\includegraphics[width=8cm]{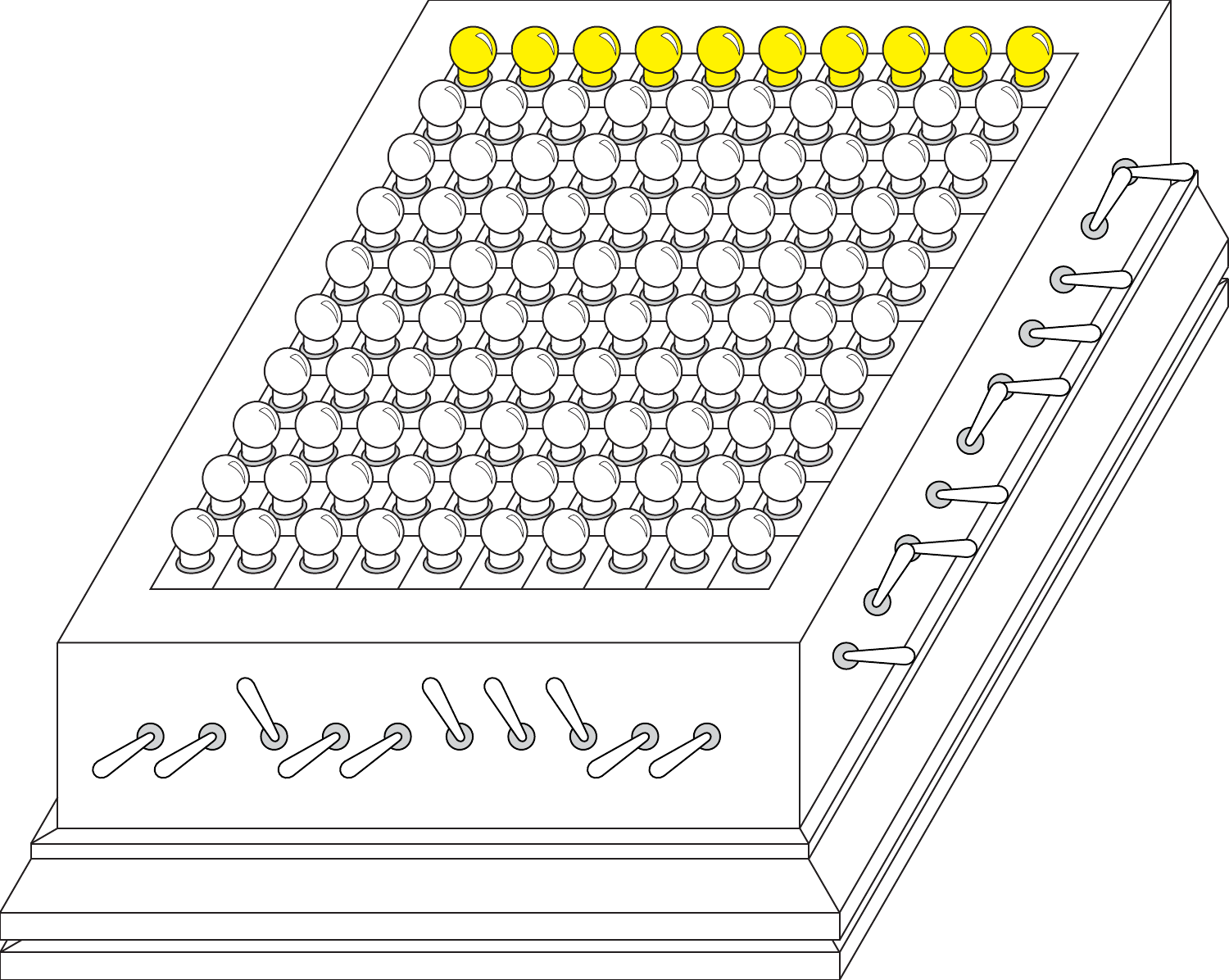}
\caption{An $10 \times
10$ matrix of light bulbs in an initial configuration $\Theta_{10}%
$ such that $i(\Theta_{10}) = 0$.}
\end{figure}%
%

\begin{figure}[H]
\centering\includegraphics[width=8cm]{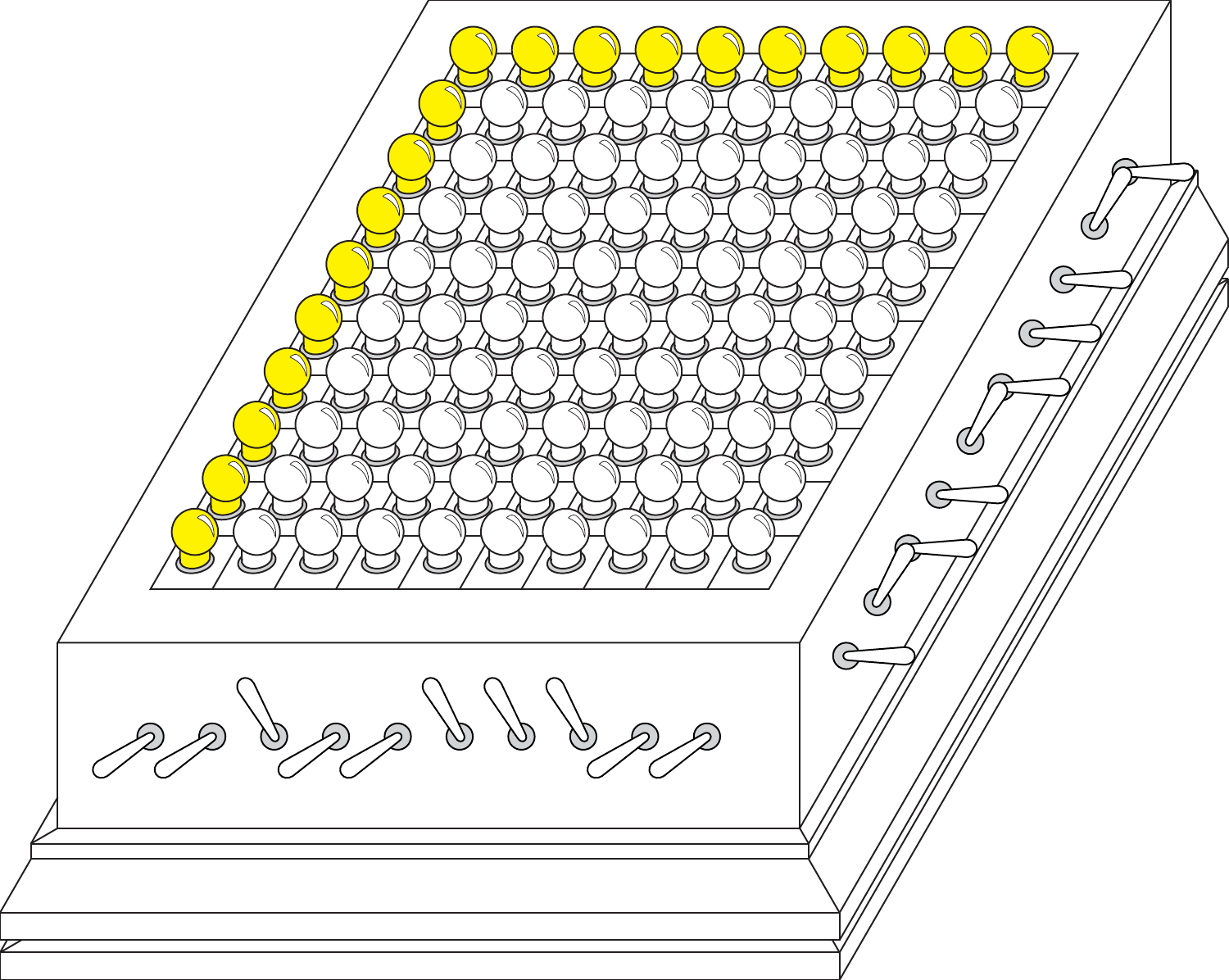}
\caption{An $10 \times
10$ matrix of light bulbs in an initial configuration $\Theta_{10}%
$ such that $i(\Theta_{10}) = 1$.}
\end{figure}%
%

\end{multicols}%

It is already known that $G_{n}$ behaves asymptotically as $n^{3/2}$. More
precisely (see \cite{JFA}),%
\[
\sqrt{\frac{2}{\pi}}+o\left(  1\right)  \leq\dfrac{G_{n}}{n^{3/2}}%
\leq1+o\left(  1\right)
\]
and, in a non-asymptotic viewpoint, it is simple to observe that
\[
\frac{1}{\sqrt{2}}\leq\dfrac{G_{n}}{n^{3/2}}=C_{\infty,\infty,n,n}\text{,}%
\]
where $C_{\infty,\infty,n,n}$ denotes the constant of Bennett's inequality for
$p_{1}=p_{2}=\infty$ and $n_{1}=n_{2}=n$ and the lower bound was obtained by
Littlewood's $4/3$ inequality with constant $\sqrt{2}$ (see \cite[Theorem
3.1]{ara}). By Theorem \ref{bbnn}, we obtain the following estimate for
$G_{n}/n^{3/2}$:
\begin{equation}
\frac{1}{\sqrt{2}}\leq\dfrac{G_{n}}{n^{3/2}}\leq\sqrt{8/5} \label{665544}%
\end{equation}
for all positive integers $n$. In this section we improve the upper bound
$\sqrt{8/5}$ in (\ref{665544}) and improve the best known upper estimates for
$R_{n}$ and $G_{n}$ for $n=15,17,18,19$.

Since $G_{n}\leq n^{3/2}$ (see \cite[Lemma 2.1]{JFA}) whenever $n$ is the
order of a Hadamard matrix, then%
\begin{equation}
R_{n}\geq\dfrac{1}{2}\left(  n^{2}-n\sqrt{n}\right)  \label{1}%
\end{equation}
for all orders $n$ of Hadamard matrices (the above inequality appears in
\cite[page 168]{TBJS}). In the next theorem we use the techniques from Section
\ref{Sec3} to obtain an extension of (\ref{1}) and provide better estimates
for the cases $n=15,17,18,19$, improving the respective estimates from Table
\ref{table1} below.%

\begin{table}[H]
\centering\caption{Estimates for $R_{n}$.}
\smallskip\begin{tabular}
[c]{|c|c|c|c|}\hline
$n$ & Brown-Spencer, $1971$ & Fishburn-Sloane, $1989$ & Carlson-Stolarski,
$2004$\\\hline$1$ & $=0$ & $=0$ & $=0$\\\hline$2$ & $=1$ & $=1$ & $=1$\\\hline
$3$ & $=2$ & $=2$ & $=2$\\\hline$4$ & $=4$ & $=4$ & $=4$\\\hline
$5$ & $=7$ & $=7$ & $=7$\\\hline$6$ & $-$ & $=11$ & $=11$\\\hline
$7$ & $-$ & $=16$ & $=16$\\\hline$8$ & $\geq22$ & $=22$ & $=22$\\\hline
$9$ & $-$ & $=27$ & $=27$\\\hline$10$ & $\geq32$ & $-$ & $=35$\\\hline
$11$ & $-$ & $-$ & $=43$\\\hline$12$ & $-$ & $-$ & $=54$\\\hline
$13$ & $-$ & $-$ & $\geq60$\\\hline$14$ & $-$ & $-$ & $\geq71$\\\hline
$15$ & $\geq72$ & $-$ & $\geq82$\\\hline$16$ & $\geq96$ & $-$ & $\geq
94$\\\hline$17$ & $-$ & $-$ & $\geq106$\\\hline$18$ & $-$ & $-$ & $\geq
120$\\\hline$19$ & $-$ & $-$ & $\geq132$\\\hline$20$ & $\geq156$ & $-$ & $\geq
148$\\\hline\end{tabular}
\label{table1}
\end{table}%

\begin{theorem}
$R_{15}\geq83$, $R_{17}\geq107$, $R_{18}\geq122$, $R_{19}\geq139$.
\end{theorem}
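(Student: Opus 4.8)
The plan is to trade each lower bound on $R_{n}$ for an upper bound on $G_{n}$, using the identity (\ref{Ber}), that is $R_{n}=\tfrac12(n^{2}-G_{n})$, together with the relation $G_{n}=C_{\infty,\infty,n,n}\,n^{3/2}$ recorded just above. Hence it is enough to bound $G_{n}$ from above for each $n\in\{15,17,18,19\}$, and the theorem will then follow by rounding, since $R_{n}$ is by construction a nonnegative integer.

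For the upper bound on $G_{n}$ I would reuse the argument of Lemma \ref{LemaFran}. Let $r$ be the smallest order of a Hadamard matrix $\mathbf{H}=[h_{ij}]$ with $r\ge n$, take the top-left $n\times n$ block as the $\pm1$ array, and consider $A(x,y)=\sum_{i,j=1}^{n}h_{ij}x_{i}y_{j}$. For $x,y\in\{-1,1\}^{n}$, zero-pad to $u,v\in\mathbb{R}^{r}$, so that $\|u\|_{2}=\|v\|_{2}=\sqrt{n}$, and by Cauchy--Schwarz together with the orthogonality relation $\sum_{j=1}^{r}h_{ij}h_{kj}=r\delta_{ik}$,
\[
|A(x,y)|\le\|v\|_{2}\Bigl(\sum_{i,k=1}^{r}u_{i}u_{k}\,r\delta_{ik}\Bigr)^{1/2}=\sqrt{n}\,\bigl(r\|u\|_{2}^{2}\bigr)^{1/2}=n\sqrt{r}.
\]
Since $G_{n}\le\max_{x,y\in\{-1,1\}^{n}}|A(x,y)|$, this gives $G_{n}\le n\sqrt{r}$ (equivalently one may invoke Lemma \ref{LemaFran} and the reduction $C_{\infty,\infty,n,n}\le C_{2,2,n,n}$, the $m=2$ case of Proposition \ref{PropKSZ1}). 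Plugging into (\ref{Ber}) yields the uniform estimate $R_{n}\ge\tfrac12(n^{2}-n\sqrt{r})$, an extension of (\ref{1}) to arbitrary $n$ (the case $r=n$ recovers (\ref{1})).

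It remains to select $r$ and evaluate. Since $16$ and $20$ are Hadamard orders and there is none strictly between them, I take $r=16$ for $n=15$ and $r=20$ for $n\in\{17,18,19\}$. This produces $\tfrac12(225-60)=82.5$, $\tfrac12(289-34\sqrt{5})\approx106.49$, $\tfrac12(324-36\sqrt{5})\approx121.75$ and $\tfrac12(361-38\sqrt{5})\approx138.01$; rounding each up to the next integer gives $R_{15}\ge83$, $R_{17}\ge107$, $R_{18}\ge122$ and $R_{19}\ge139$.

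The only genuinely delicate point is the last rounding: the bound for $n=19$ lies only a little above $138$, so I would confirm $\tfrac12(361-38\sqrt{5})>138$ rigorously, i.e. $38\sqrt{5}<85$, which is equivalent to $5<(85/38)^{2}=7225/1444$ and therefore holds. For $n=15,17,18$ the fractional parts sit comfortably away from an integer, so the remaining work is simply the routine arithmetic displayed above.
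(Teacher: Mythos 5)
Your proposal is correct and follows essentially the same route as the paper: bound $G_n$ by $n\sqrt{r}$ via a truncated Hadamard matrix of order $r\ge n$ (the paper phrases the Cauchy--Schwarz step through Lemma \ref{LemaFran} and a lemma from \cite{JFA} rather than scaling $\pm1$ vectors directly, but the computation is identical), then apply $R_n=\tfrac12(n^2-G_n)$ with $r=16$ for $n=15$ and $r=20$ for $n=17,18,19$ and round up. Your explicit verification that $38\sqrt{5}<85$ is a welcome touch the paper glosses over with a decimal approximation.
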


\begin{proof}
Given a positive integer $n$, let $k_{n}$ be the smallest order of a Hadamard
matrix $\left[  h_{ij}\right]  _{k_{n}\times k_{n}}$ such that%
\[
n\leq k_{n}\text{.}%
\]
Obviously,%
\begin{equation}
G_{n}\leq\max_{x_{i},y_{j}\in\{-1,1\}}\left\vert
{\textstyle\sum\limits_{i=1}^{n}}
{\textstyle\sum\limits_{j=1}^{n}}
h_{ij}x_{i}y_{j}\right\vert \text{.} \label{ABAB}%
\end{equation}
Let $z=\left(  z_{1},\ldots,z_{n}\right)  ,w=\left(  w_{1},\ldots
,w_{n}\right)  \in\mathbb{\ell}_{2}^{n}$ be such that $\left\Vert z\right\Vert
_{2},\left\Vert w\right\Vert _{2}\leq1$. Proceeding as in Lemma \ref{LemaFran}%
, we conclude that%
\[
\left\vert
{\textstyle\sum\limits_{i=1}^{n}}
\,%
{\textstyle\sum\limits_{j=1}^{n}}
h_{ij}z_{i}w_{j}\right\vert \leq\sqrt{k_{n}}\text{.}%
\]
So, considering $f\colon\mathbb{N}\rightarrow\mathbb{R}$ given by $f\left(
n\right)  =\sqrt{k_{n}}$, it follows from \cite[Lemma 3.1]{JFA} that
\begin{equation}
\left\vert
{\textstyle\sum\limits_{i=1}^{n}}
\,%
{\textstyle\sum\limits_{j=1}^{n}}
h_{ij}x_{i}y_{j}\right\vert \leq n\sqrt{k_{n}}\text{.} \label{BCBC}%
\end{equation}
whenever $\left\Vert x\right\Vert _{\infty},\left\Vert y\right\Vert _{\infty
}\leq1$. Denoting $G_{2,n}=n\sqrt{k_{n}}$ and combining (\ref{ABAB}) and
(\ref{BCBC}) we have%
\[
G_{n}\leq G_{2,n}\text{.}%
\]

Remembering that, for each $n\in\mathbb{N}$,%
\[
R_{n}=\dfrac{1}{2}\left(  n^{2}-G_{n}\right)
\]
we have%
\begin{equation}
R_{n}\geq\dfrac{1}{2}\left(  n^{2}-G_{2,n}\right)  \text{.} \label{A1}%
\end{equation}
Hence, since%
\begin{align*}
G_{2,15}  &  =15\sqrt{16}=60\\
G_{2,17}  &  =17\sqrt{20}\approx76.02\\
G_{2,18}  &  =18\sqrt{20}\approx80.49\\
G_{2,19}  &  =19\sqrt{20}\approx84.97
\end{align*}
we conclude from (\ref{A1}) that%
\[
R_{15}\geq83\text{, \ \ \ }R_{17}\geq107\text{, \ \ \ \ }R_{18}\geq122\text{,
\ \ \ \ }R_{19}\geq139\text{,}%
\]
as we wished.
\end{proof}

%

\begin{table}[H]
\caption{New estimates for $R_{n}$ when $n=15,17,18,19$}
\smallskip\begin{tabularx}{\textwidth}
{|r|X|X|X|X|}\hline$n$ & \centering$15$ & \centering$17$ & \centering
$18$ & $\frac{}{}$ \hfill$19$ \hfill$\frac{}{}$\\
\hline$R_{n}$ & \centering$\geq83$ & \centering$\geq107$ & \centering
$\geq122$ & $\frac{}{}$ \hfill$\geq139$ \hfill$\frac{}{}$\\
\hline\end{tabularx}
\label{table2}
\end{table}%

We finish this paper combining the results of this section and Section
\ref{Sec3} to improve the upper bound of (\ref{665544}) and provide better
constants to Bennett's inequality when $p_{1}=p_{2}=\infty$. It follows from
(\ref{Ber}) that
\[
G_{n}=n^{2}-2R_{n}\text{.}%
\]
By the first case of Proposition \ref{PropA} we know that
\begin{equation}
C_{\infty,\infty,n,n}\leq C_{2,2,n,n}\text{ for each }n\in\mathbb{N}\text{.}
\label{AK}%
\end{equation}
So, using Tables \ref{table1} and \ref{table2}, (\ref{AK}) and Lemma
\ref{LemaFran} we have%

\begin{table}[H]%
%

\centering
%

\caption{Upper bounds for $C_{\infty,n}$ with $1 \leq n \leq20$.}%
\smallskip%
\begin{tabular}
[c]{|c|c|c|c|c|c|c|c|c|}\cline{1-4}\cline{6-9}%
$n$ & $R_{n}$ & $G_{n}$ & $C_{\infty,\infty,n,n}=G_{n}/n^{3/2}$ &  & $n$ &
$R_{n}$ & $G_{n}$ & $C_{\infty,\infty,n,n}=G_{n}/n^{3/2}$\\\cline{1-4}%
\cline{6-9}%
$1$ & $=0$ & $=1$ & $=1$ &  & $11$ & $=43$ & $=35$ & $=35/11^{3/2}%
<0.96$\\\cline{1-4}\cline{6-9}%
$2$ & $=1$ & $=2$ & $=2/2^{3/2}<0.71$ &  & $12$ & $=54$ & $=36$ &
$=36/12^{3/2}<0.87$\\\cline{1-4}\cline{6-9}%
$3$ & $=2$ & $=5$ & $=5/3^{3/2}<0.97$ &  & $13$ & $\geq60$ & $\leq49$ &
$\leq49/13^{3/2}<1.05$\\\cline{1-4}\cline{6-9}%
$4$ & $=4$ & $=8$ & $=8/4^{3/2}=1$ &  & $14$ & $\geq71$ & $\leq54$ &
$\leq54/14^{3/2}<1.04$\\\cline{1-4}\cline{6-9}%
$5$ & $=7$ & $=11$ & $=11/5^{3/2}<0.99$ &  & $15$ & $\geq83$ & $\leq59$ &
$\leq59/15^{3/2}<1.02$\\\cline{1-4}\cline{6-9}%
$6$ & $=11$ & $=14$ & $=14/6^{3/2}<0.96$ &  & $16$ & $-$ & $-$ & $\leq
1$\\\cline{1-4}\cline{6-9}%
$7$ & $=16$ & $=17$ & $=17/7^{3/2}<0.92$ &  & $17$ & $\geq107$ & $\leq75$ &
$\leq75/17^{3/2}<1.08$\\\cline{1-4}\cline{6-9}%
$8$ & $=22$ & $=20$ & $=20/8^{3/2}<0.89$ &  & $18$ & $\geq122$ & $\leq80$ &
$\leq80/18^{3/2}<1.05$\\\cline{1-4}\cline{6-9}%
$9$ & $=27$ & $=27$ & $=27/9^{3/2}=1$ &  & $19$ & $\geq139$ & $\leq83$ &
$\leq83/19^{3/2}<1.01$\\\cline{1-4}\cline{6-9}%
$10$ & $=35$ & $=30$ & $=30/10^{3/2}<0.95$ &  & $20$ & $-$ & $-$ & $\leq
1$\\\cline{1-4}\cline{6-9}%
\end{tabular}
%

\label{table3}%
%

\end{table}%

\begin{theorem}
For all positive integers $n$, we have%
\[
0.70\approx\frac{1}{\sqrt{2}}\leq\dfrac{G_{n}}{n^{3/2}}\leq\frac{75\sqrt{17}%
}{289}\approx1.07\text{.}%
\]

\end{theorem}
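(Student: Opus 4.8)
The plan is to treat the two inequalities separately: the lower bound is essentially already on record, while the upper bound requires splitting into small and large $n$ and squeezing the Hadamard estimates more tightly than in Theorem \ref{bbnn}.

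For the lower bound I would simply invoke the estimate $G_n/n^{3/2}\geq 1/\sqrt 2$ established earlier via Littlewood's $4/3$ inequality (with constant $\sqrt2$); no new argument is needed, and $1/\sqrt2\approx0.70$. For the upper bound the starting point is the identity $G_n/n^{3/2}=C_{\infty,\infty,n,n}$ together with the inequality $C_{\infty,\infty,n,n}\leq C_{2,2,n,n}$ recorded in $(\ref{AK})$. The delicate point is that the target constant $\tfrac{75\sqrt{17}}{289}=75/17^{3/2}\approx 1.07$ is sharper than the global bound $\sqrt{8/5}$, so I cannot merely quote Theorem \ref{bbnn}. First, for $1\leq n\leq 20$ I would read the bounds directly off Table \ref{table3}: every entry of the column $C_{\infty,\infty,n,n}=G_n/n^{3/2}$ is at most $75/17^{3/2}$, and the maximum is attained at $n=17$, where the improved estimate $R_{17}\geq107$ forces $G_{17}=289-214=75$ and hence $G_{17}/17^{3/2}\leq 75/17^{3/2}=\tfrac{75\sqrt{17}}{289}$. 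This single entry is exactly what produces the stated constant.

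For $n\geq 21$ I would re-run the covering argument of Theorem \ref{bbnn}, now profiting from the fact that the small indices responsible for the worst cases there ($n=3,5,6,7$) have already been absorbed into the range handled by Table \ref{table3}. Concretely, for $21\leq n\leq 664$ I use Lemma \ref{LemaFran}: if $n=4k$ then $C_{2,2,n,n}\leq1$, while if $4k+1\leq n\leq4k+3$ then $C_{2,2,n,n}\leq\left(4(k+1)/n\right)^{1/2}\leq\left(4(k+1)/(4k+1)\right)^{1/2}$, the right-hand side being decreasing in $k$; since $n\geq21$ forces $k\geq5$, the worst case is $k=5$, $n=21$, giving $(8/7)^{1/2}$. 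For $n\geq665$ I reuse the partition $\mathbb{N}=\{1,\ldots,664\}\cup\bigcup_{m\geq2}\bigcup_{k=8}^{63}A_{m,k}$ from Theorem \ref{bbnn}, which yields $C_{2,2,n,n}\leq(1+1/k)^{1/2}\leq(9/8)^{1/2}$.

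The main obstacle is the numerical comparison at $n=21$, which is genuinely tight: I must verify $(8/7)^{1/2}\leq 75\sqrt{17}/289$. Squaring both sides reduces this to $8/7\leq 5625/4913$, i.e.\ to the integer inequality $8\cdot4913\leq 7\cdot5625$, that is $39304\leq 39375$, which holds with a slender margin of only $71$. The remaining large-$n$ estimate $(9/8)^{1/2}\approx1.06$ clears the target comfortably, so once the $n=21$ case is checked the proof assembles: for every positive integer $n$ exactly one of the three regimes applies, and in each case $G_n/n^{3/2}=C_{\infty,\infty,n,n}\leq 75\sqrt{17}/289$, completing the upper bound.
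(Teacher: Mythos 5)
Your proposal is correct and follows essentially the same route as the paper: Table \ref{table3} for $n\le 20$ (with the extremal value $75/17^{3/2}$ at $n=17$), Lemma \ref{LemaFran} with $k\ge 5$ giving $\sqrt{8/7}$ for $21\le n\le 664$, and the $A_{m,k}$ covering giving $\sqrt{9/8}$ for $n\ge 665$. Your explicit integer check $8\cdot 4913=39304\le 39375=7\cdot 5625$ at the tight case $n=21$ is a welcome addition that the paper leaves implicit.
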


\begin{proof}
All that it is left to prove is the upper estimate. It follows from Table
\ref{table3} that is enough to consider the case $n>20$. Since the upper bound
provided by Table \ref{table3} for $n=1,\ldots,20$ is $75\sqrt{17}/289$, it
suffices to prove that the same upper bound holds for $n>20$.

If $21\leq n\leq664$, let $k\in\left\{  5,\ldots,165\right\}  $ be such that
$4k<n\leq4\left(  k+1\right)  $. So, by (\ref{AK}) and Lemma \ref{LemaFran},
we have%
\[
\dfrac{G_{n}}{n^{3/2}}=C_{\infty,\infty,n,n}\leq C_{2,2,n,n}\leq\left(
\frac{4\left(  k+1\right)  }{n}\right)  ^{1/2}<\left(  \frac{4\left(
k+1\right)  }{4k+1}\right)  ^{1/2}\leq\sqrt{\frac{24}{21}}\text{.}%
\]
Now, for each $m\in\mathbb{N}$ and each $k\in\left\{  8,\ldots,63\right\}  $
let $A_{m,k}$ as in Theorem \ref{bbnn}. Fixed $n\geq665$, we know that there
is $\left(  m,k\right)  \in\mathbb{N}\times\left\{  8,\ldots,63\right\}  $
such that $n\in A_{m,k}$ and, therefore, by (\ref{AK}) and Lemma
\ref{LemaFran}, we obtain%
\[
\dfrac{G_{n}}{n^{3/2}}=C_{\infty,\infty,n,n}\leq C_{2,2,n,n}\leq\left(
\frac{8^{m}\left(  k+1\right)  }{n}\right)  ^{1/2}<\left(  \frac{8^{m}\left(
k+1\right)  }{8^{m}k}\right)  ^{1/2}\leq\sqrt{\frac{9}{8}}\text{.}%
\]
Since $\sqrt{24/21}$ and $\sqrt{9/8}$ are smaller than $75\sqrt{17}/289$, the
proof is completed.
\end{proof}

%

\begin{figure}[H]
\centering\includegraphics[width=5.9cm]{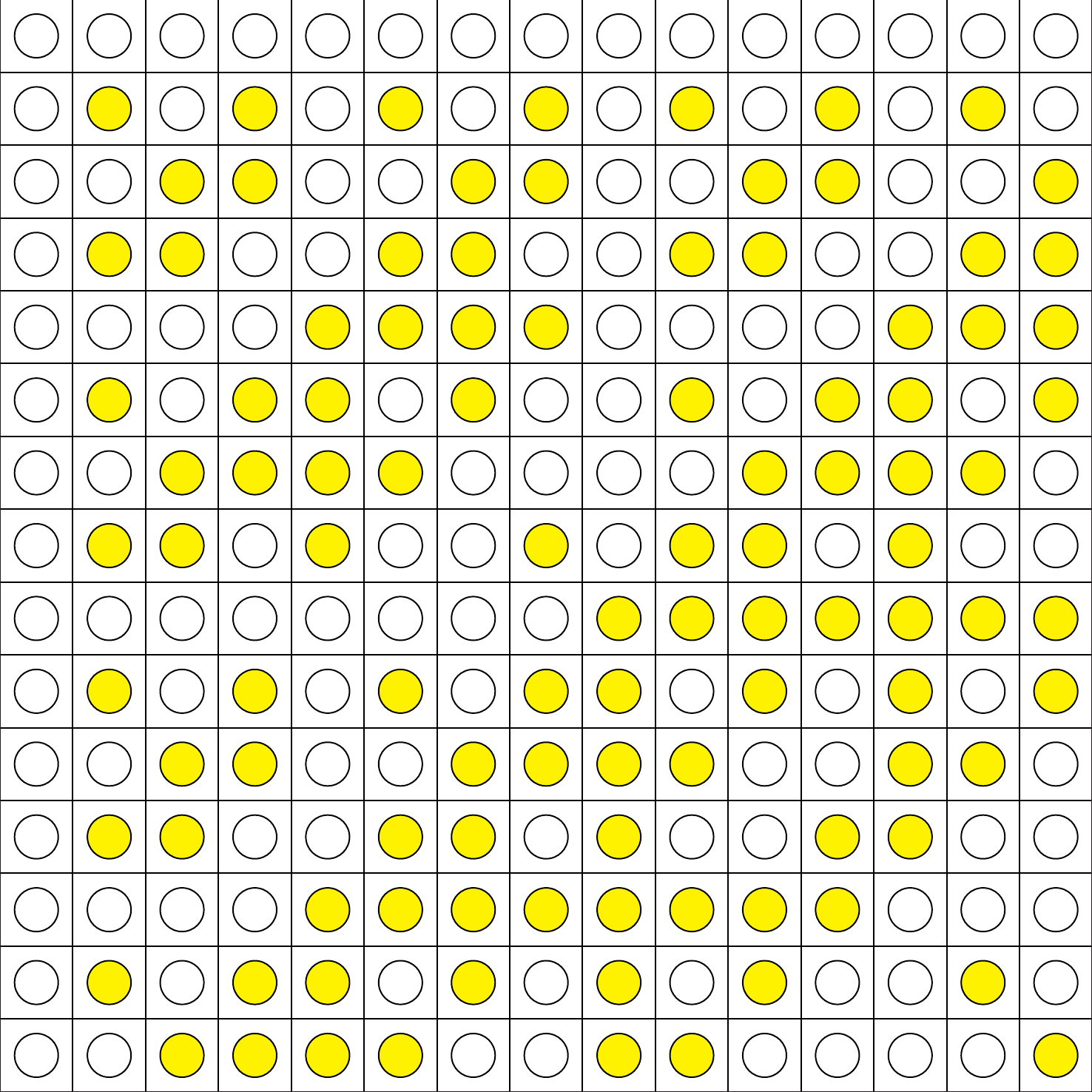}
\caption{Initial configuration of lights generating $R_{15} \geq83$}
\end{figure}%
%

\begin{figure}[H]
\centering\includegraphics[width=5.9cm]{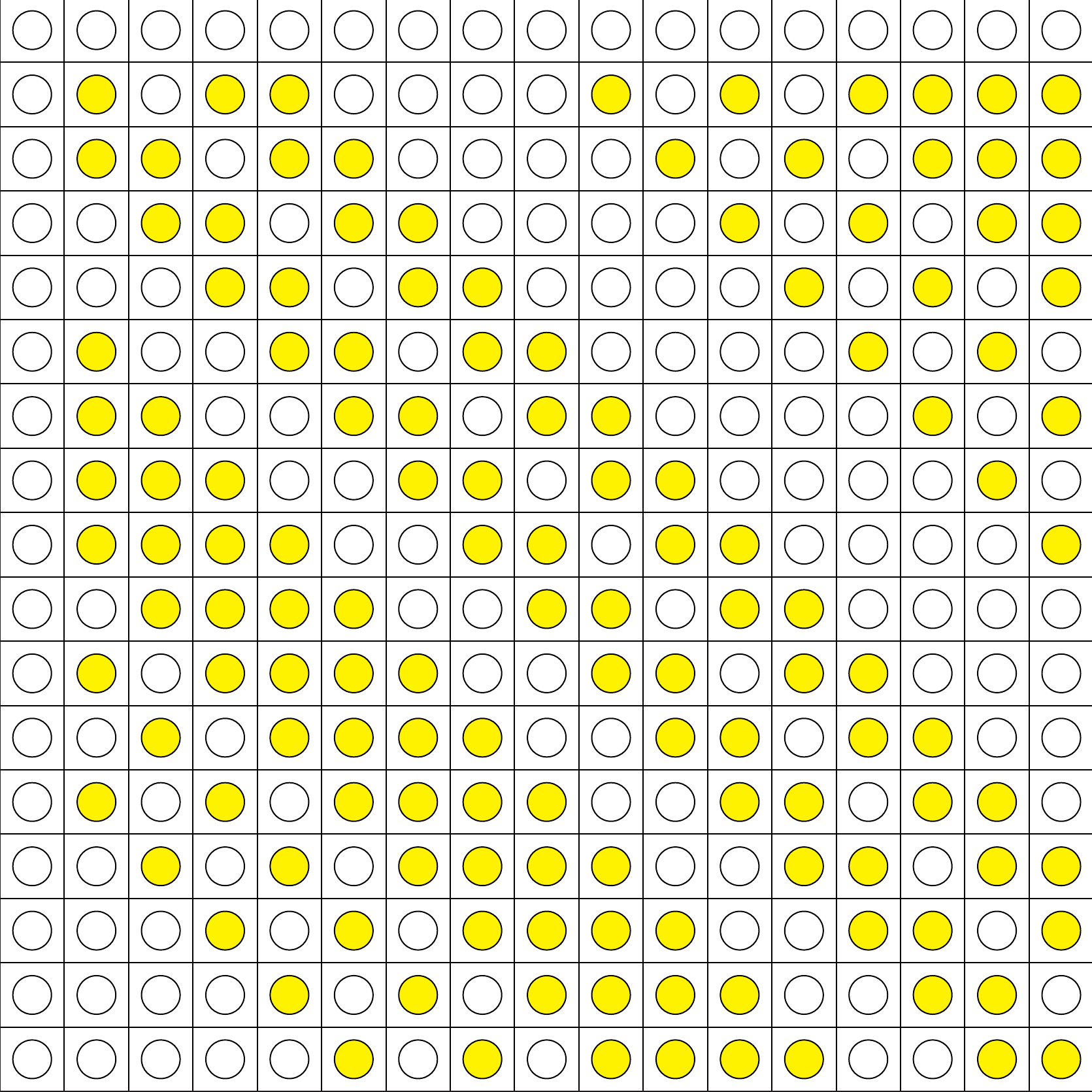}
\caption{Initial configuration of lights generating $R_{17} \geq107$}
\end{figure}%
%

\begin{figure}[H]
\centering\includegraphics[width=5.9cm]{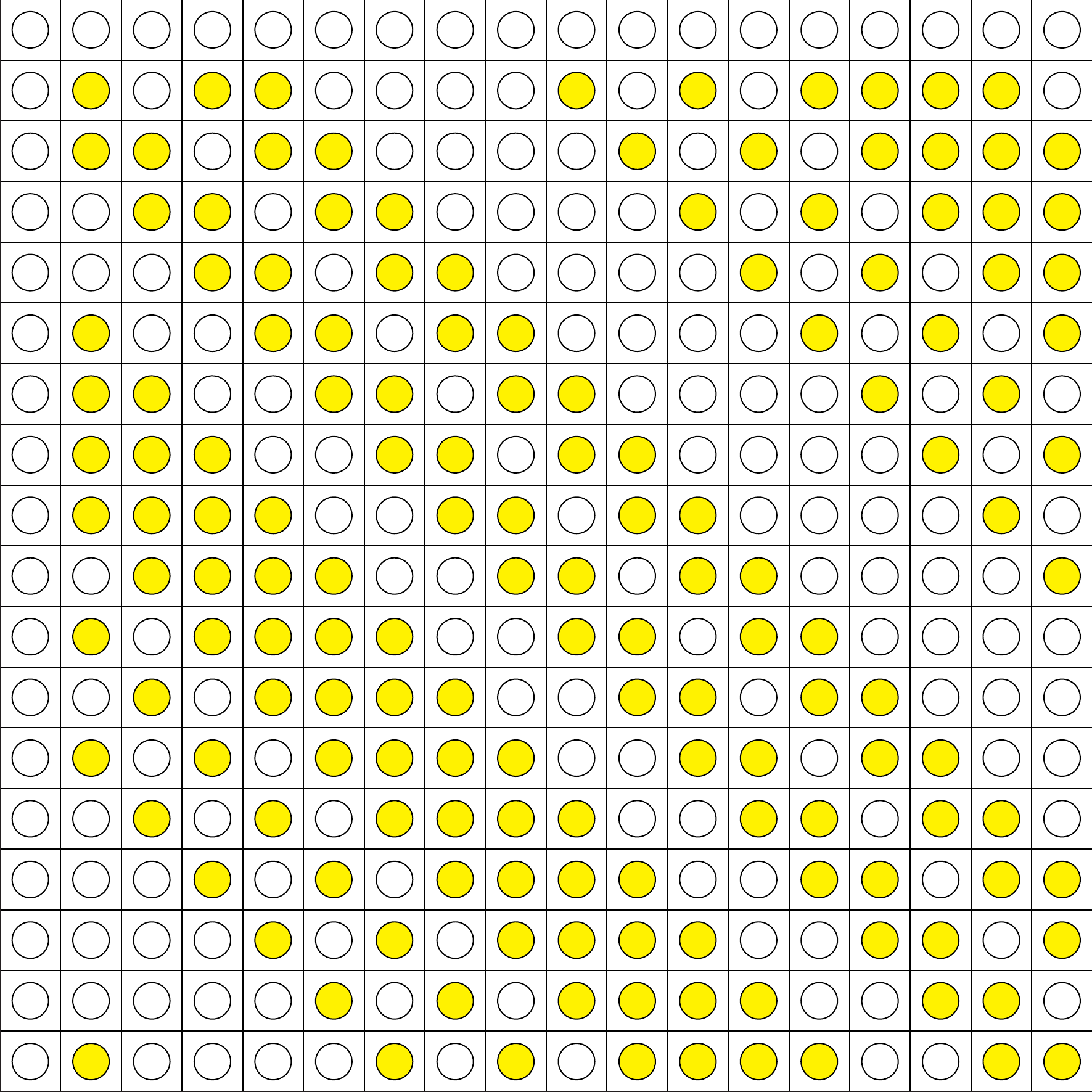}
\caption{Initial configuration of lights generating $R_{18} \geq122$}
\end{figure}%
%

\begin{figure}[H]
\centering\includegraphics[width=5.9cm]{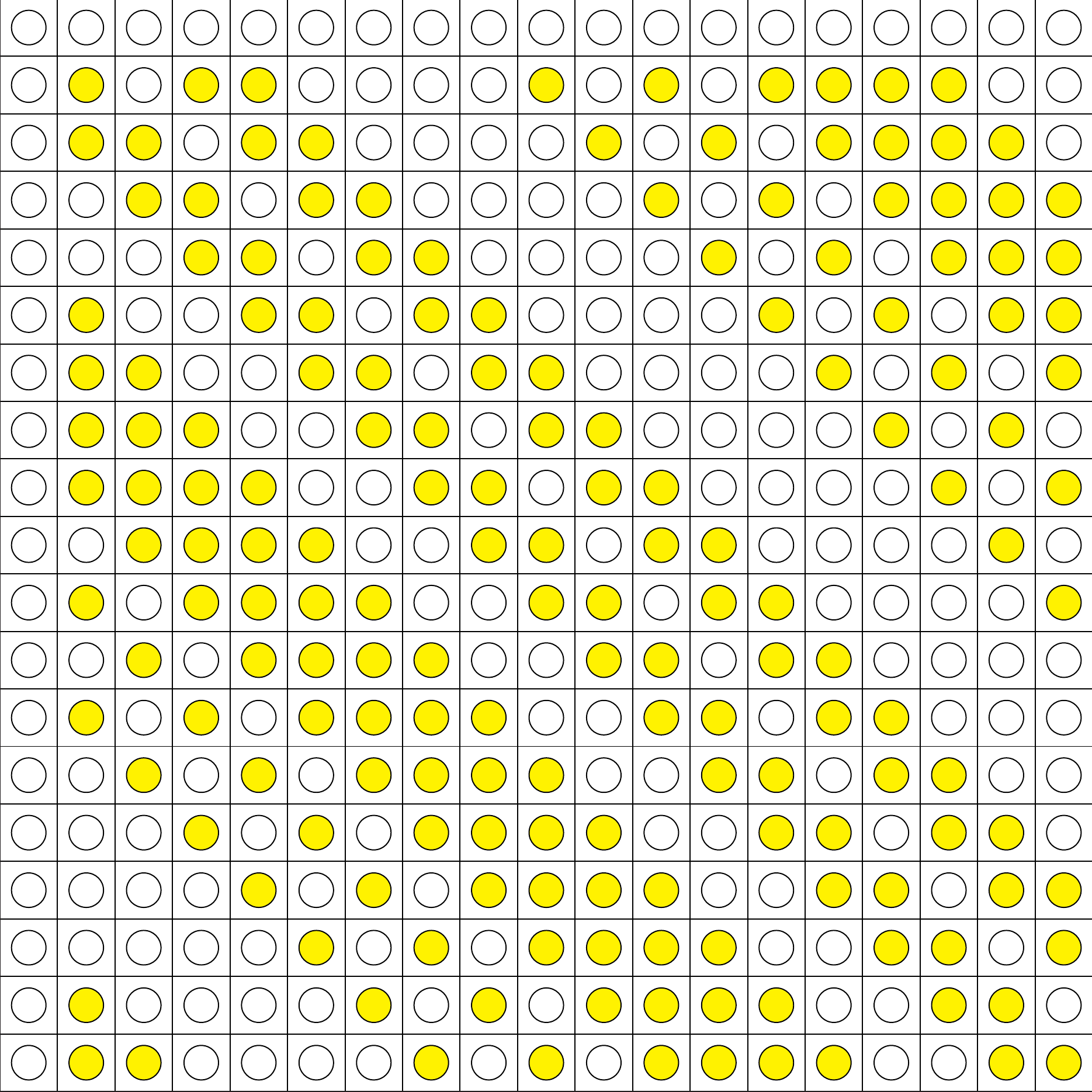}
\caption{Initial configuration of lights generating $R_{19} \geq139$}
\end{figure}%

\end{document}